\newtheorem{thm}{Theorem}[section]
\newtheorem{lem}[thm]{Lemma}
\newtheorem{prop}[thm]{Proposition}
\newcounter{other}            
\newtheorem{otherth}[other]{Theorem}              
\newtheorem{otherl}[other]{ Lemma}        
\def\D{\mathbb{D}}
\def\T{\mathbb{T}}
\def\C{\mathbb{C}}
\def\Q{\mathcal{Q}}
\def\B {\mathcal B}
\def \f{\frac}
\begin{document}

\title [Interpolating sequences]
{Interpolating sequences for some subsets of analytic Besov type spaces  }
\author{Ruishen Qian and  Fangqin Ye}
\address{ Ruishen Qian\\
     Lingnan Normal University\\
     Zhanjiang 524048, Guangdong,  China}
\email{qianruishen@sina.cn}

\address{Fangqin Ye\\
     Shantou University\\
    Shantou 515063, Guangdong, China}
\email{fqye@stu.edu.cn}

\thanks{R. Qian was supported by NNSF of China (No. 11801250 and No. 11871257), Overseas Scholarship Program for Elite Young and Middle-aged Teachers of Lingnan Normal University, the Key Subject Program of Lingnan Normal University (No. 1171518004 and No. LZ1905), and the Department of Education of Guangdong Province (No. 2018KTSCX133). F. Ye was supported by NNSF of China (No. 12001352) and  Guangdong Basic and Applied Basic Research Foundation (No. 2019A1515110178).} \subjclass[2010]{ 30E05, 30H05, 30H25}
\keywords{Interpolating sequence,   multiplier, Besov type space, $F(p, p-2, s)$ space.  }
\begin{abstract}
Let $B_p(s)$ be an analytic  Besov type space. Let $M(B_p(s))$ be the class of multipliers of $B_p(s)$ and let $F(p, p-2, s)$ be the M\"obius invariant subspace generated by $B_p(s)$. In this paper, when  $0<s<1$ and $\max\{s, 1-s\}<p\leq 1$,  we  give a completed description of interpolating sequences for  $M(B_p(s))$ and $F(p, p-2, s)\cap H^\infty$. We also  consider  certain condition appeared in this description by   an $L^p$ characterization and the closure of $F(p, p-2, s)$ in the Bloch space.
\end{abstract}

\maketitle

\section{Introduction}

A classical topic in complex analysis is to characterize interpolating sequences for some spaces of  bounded analytic functions. Let $H^\infty$ be the space of bounded analytic functions on the open unit disk $\D$ of the complex plane $\C$. For   $X\subseteq H^\infty$, a sequence $\{z_n\}$ in $\D$ is said to be  an interpolating sequence for $X$ if for each
bounded sequence $\{\zeta_n\}$ of complex values, there exists a function $f\in X$ such that $f(z_n) = \zeta_n$
for every  $n$. A notable  result of L. Carleson \cite{C1} stated   that
$\{z_{n}\}$ is an interpolating sequence for $H^{\infty}$ if and only if $\{z_{n}\}$ is uniformly separated,  namely  there exists a $\delta > 0$ with
$$
\inf_{m}\prod_{n\neq m} \left|\frac{z_m-z_n}{1-\overline{z_m}z_n}\right|\geq \delta.
$$
We refer to \cite{C2, E} for more results related to  this topic.

This paper  is devoted  to consider interpolating sequences for certain  spaces of  bounded analytic functions related to analytic Besov type spaces. Next we recall some definitions and notations.

 For an arc $I$ of the unit circle $\T$, the
 Carleson box  $S(I)$ is
$$
S(I)=\{r\zeta \in \D: 1-|I|<r<1, \ \zeta\in I\},
$$
where $|I|$ is the normalized arclength of $I$.  For $0<s<\infty$,  a nonnegative  Borel measure $\mu$ on $\D$ is said to be an  $s$-Carleson measure  if
$$
\sup_{I\subseteq\T}\frac{\mu(S(I))}{|I|^s}<\infty.
$$
For $s=1$, we get the classical  Carleson measure. Also, if $\mu$ is a $1$-Carleson measure, we write that $\mu$ is a Carleson measure.

Recall that for $a$ and $z$ in $\D$, $\rho(a, z)=|a-z|/|1-\overline{a}z|$ is the pseudo-hyperbolic metric between $a$ and $z$.
A sequence $\{z_n\}_{n=1}^{\infty}$ in $\D$ is called separated if
$$
\inf_{n \not=k}\rho(z_n, z_k)>0.
$$
It is well known (cf. \cite{G}) that $\{z_n\}_{n=1}^{\infty}$ in $\D$ is an interpolating sequence for $H^\infty$ if and only if $\{z_n\}_{n=1}^{\infty}$ is separated  and
$\sum_{n=1}^{\infty} (1-|z_n|^2)\delta_{z_n}$ is a Carleson measure.

Denote by $H(\D)$ the space of analytic functions in $\D$.
For $0<p<\infty$ and $s>1-p$, let $B_p(s)$
be the Besov type space consisting of  functions $f\in H(\D)$ with
$$
\|f\|_{B_p(s)}=|f(0)|+\left(\int_{\D}|f'(z)|^p(1-|z|^2)^{p-2+s}dA(z)\right)^{1/p}<\infty,
$$
where $dA$ is the normalized area measure in $\D$.  It is easy to check that  when  $s\leq 1-p$, the integral above is finite only if $f$ is constant. If $s>1$, then $B_p(s)$ is the Bergman space $A^p_{s-2}$ (see \cite{Zhu}).
Besov type spaces $B_p(s)$ have been studied extensively.
N. Arcozzi, D. Blasi and J. Pau \cite{ABP} characterized universal interpolating sequences for $B_p(s)$ spaces.  D. Blasi and J. Pau \cite{BlasiPau} studied  Hankel-type operators associated with  $B_p(s)$ spaces. See \cite{Co,  Wu} for more results of $B_p(s)$ spaces.

Denote by $M(B_p(s))$ the class of multipliers of the space $B_p(s)$. Namely
$$
M(B_p(s))=\{f\in H(\D): fg\in B_p(s) \ \ \text{for all} \ \ g\in B_p(s)\}.
$$
A nonnegative  Borel measure $\mu$ on $\D$ is called a Carleson measure for  $B_p(s)$ if there is a positive constant $C$ such that
$$
\int_\D |f(z)|^pd\mu(z)\leq C \|f\|_{B_p(s)}^p
$$
for all $f\in B_p(s)$.

Next we recall the M\"obius invariant function space generated by $B_p(s)$. For $a\in \D$, let
$$
\sigma_a(z)=\frac{a-z}{1-\overline{a}z}, \qquad z\in \D.
$$
Namely  $\sigma_a$ is  a M\"obius transformation of $\D$ interchanging the points $0$ and $a$.  Recall that
$$
\text{Aut}(\D)=\{e^{i\theta}\sigma_a:\  \ a\in \D \ \text{and} \ \theta \ \ \text{is real}\}
$$
is the group of M\"obius maps of $\D$. For $0<p<\infty$ and  $0< s<\infty$, the space $F(p, p-2, s)$ consists of those functions $f\in H(\D)$ such that
$$
\|f\|_{F(p, p-2, s)}^p=\sup_{a\in \D}\int_\D |f'(z)|^p(1-|z|^2)^{p-2}(1-|\sigma_a(z)|^2)^s dA(z)<\infty.
$$
Namely, for $f\in F(p, p-2, s)$ if and only if $|f'(z)|^p(1-|z|^2)^{p-2+s}$ is an $s$-Carleson measure.
 It is also known that $F(p, p-2, s)$ contains only constant functions if $p+s\leq 1$. Hence,   to  study
$F(p, p-2, s)$ spaces, we let  $p+s>1$. $F(p, p-2, s)$ spaces and a general family of analytic function spaces  were introduced by R. Zhao \cite{Zhao} and further investigated  in \cite{Rat}.
 If $p=2$ and $s=1$, then $F(p, p-2, s)$ is $BMOA$, the space of analytic functions whose boundary values having bounded mean oscillation on the unit circle.
 If  $s>1$, all nontrivial $F(p, p-2, s)$ spaces are the same and equal to the  Bloch space $\B$ consisting of those functions $f\in H(\D)$ with
 $$
 \sup_{z\in \D}(1-|z|^2)|f'(z)|<\infty.
 $$
 $F(p, p-2, s)$ spaces are M\"obius invariant
in the sense that
$$
\|f\circ  \phi-f(\phi(0))\|_{F(p, p-2, s)}=\|f\|_{F(p, p-2, s)}
$$
for every $f\in F(p, p-2, s)$ and $\phi \in \text{Aut}(\D)$. Indeed, $F(p, p-2, s)$ is equal to the M\"obius invariant subspace generated by $B_p(s)$; that is, $F(p, p-2, s)$ is the space of those functions
$f\in H(\D)$ with
$$
\sup_{\phi\in \text{Aut}(\D)}\left\|f\circ\phi-f(\phi(0))\right\|_{B^p_s}<\infty.
$$
See \cite{AFP} for the general theory of M\"obius invariant function spaces. If $p=2$, then $F(p, p-2, s)$ is the well-known M\"obius invariant  space $\Q_s$ (see \cite{AXZ, Xi1, Xi2}).

As usual, let  $\delta_a$ be the unit point-mass measure at $a\in \D$. N. Arcozzi, D. Blasi and J. Pau \cite{ABP} gave  a series of interesting results on  $B_p(s)$ and  $M(B_p(s))$. In particular, they obtained the following
theorem.
\begin{otherth}\label{ABP}
Suppose  $1<p<\infty$, $0<s<1$ and $\{z_n\}_{n=1}^{\infty}$ is a sequence in $\D$. Then the following conditions are equivalent:
\begin{enumerate}
  \item [(a)]$\{z_n\}_{n=1}^{\infty}$ is an interpolating sequence for $M(B_p(s))$;
  \item [(b)]$\{z_n\}_{n=1}^{\infty}$ is a separated sequence  and $\sum_{n=1}^{\infty} (1-|z_n|^2)^s\delta_{z_n}$ is a Carleson measure for $B_p(s)$.
  \end{enumerate}
\end{otherth}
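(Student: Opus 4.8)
The plan is to establish the two implications separately. Throughout I would use three preliminary facts about $B_p(s)$ in the range $0<s<1$, $1<p<\infty$: that point evaluation is bounded, with $|f(a)|\lesssim\|f\|_{B_p(s)}(1-|a|^2)^{-s/p}$ (obtained by integrating the pointwise bound on $f'$); that every multiplier is bounded, so that $M(B_p(s))\hookrightarrow H^\infty$ with $\|f\|_\infty\lesssim\|f\|_{M(B_p(s))}$; and the known description of the multiplier norm, namely that $g\in M(B_p(s))$ precisely when $g\in H^\infty$ and $|g'(z)|^p(1-|z|^2)^{p-2+s}\,dA(z)$ is a Carleson measure for $B_p(s)$. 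These let me move freely between the multiplier algebra, $H^\infty$, and the $s$-weighted Carleson condition appearing in (b).

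For the necessity (a)$\Rightarrow$(b), I would first make interpolation quantitative: by the open mapping theorem the surjective restriction map $R\colon M(B_p(s))\to\ell^\infty$, $Rf=(f(z_n))_n$, admits a bounded right inverse, so every bounded target is attained with a norm bound $C\sup_n|\zeta_n|$. Composing with $M(B_p(s))\hookrightarrow H^\infty$ shows $\{z_n\}$ is $H^\infty$-interpolating, hence separated by the Carleson criterion recalled in the introduction. The Carleson-measure condition for $B_p(s)$ I would obtain by a duality argument from the boundedness of the extension operator, reformulated as boundedness of the restriction map $B_p(s)\to\ell^p((1-|z_n|^2)^s)$, $g\mapsto(g(z_n))_n$; the inequality $\sum_n|g(z_n)|^p(1-|z_n|^2)^s\lesssim\|g\|_{B_p(s)}^p$ is precisely the statement that $\sum_n(1-|z_n|^2)^s\delta_{z_n}$ is a Carleson measure for $B_p(s)$.

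For the sufficiency (b)$\Rightarrow$(a), I would first upgrade (b) to uniform separation. Testing the Carleson condition on the normalized functions $f_a(z)=((1-|a|^2)/(1-\overline{a}z))^{\beta}$, which satisfy $\|f_a\|_{B_p(s)}^p\approx|I|^s$ and $|f_a|\approx1$ on the box $S(I)$ centred at $a$, gives the box bound $\mu_s(S(I))\lesssim|I|^s$ for $\mu_s=\sum_n(1-|z_n|^2)^s\delta_{z_n}$; since $1-|z_n|^2\lesssim|I|$ on $S(I)$ and $0<s<1$, this forces $\sum_{z_n\in S(I)}(1-|z_n|^2)\lesssim|I|^{1-s}\mu_s(S(I))\lesssim|I|$, so the unweighted measure is a classical Carleson measure and, with separation, $\{z_n\}$ is $H^\infty$-interpolating and therefore uniformly separated. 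Now, for a bounded target $\{\zeta_n\}$, I would build a smooth interpolant $G=\sum_n\zeta_n\chi_n$ from bumps supported in disjoint pseudohyperbolic balls $D(z_n,r)$ with $\chi_n\equiv1$ near $z_n$, and correct it to an analytic function by solving $\bar\partial u=\bar\partial G$ with $u(z_n)=0$; writing $u=Bv$ for the Blaschke product $B$ vanishing at the $z_n$ (which exists since the testing above gives $\sum_n(1-|z_n|^2)<\infty$) and solving $\bar\partial v=\bar\partial G/B$, the solution automatically vanishes at each $z_n$, so $f=G-u$ is analytic with $f(z_n)=\zeta_n$.

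The hard part will be the sufficiency, specifically verifying that $f=G-u$ lies in $M(B_p(s))$, which by the recalled description means controlling both $\|f\|_\infty$ and the Carleson-measure behaviour of $|f'|^p(1-|z|^2)^{p-2+s}\,dA$ in terms of $\sup_n|\zeta_n|$. This is delicate because Carleson measures for $B_p(s)$ with $0<s<1$ need not be captured by a simple box condition, so the $\bar\partial$-estimate cannot be read off from a single geometric inequality. I would therefore phrase the required bound as a testing inequality against an arbitrary $h\in B_p(s)$, reducing it to summing local contributions that, via the off-diagonal decay of the kernels and the separation, are dominated by $\sum_n|h(z_n)|^p(1-|z_n|^2)^s$; the Carleson hypothesis on the sequence then closes the estimate by bounding this sum by $\|h\|_{B_p(s)}^p$. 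Balancing the $\bar\partial$-correction simultaneously in $H^\infty$ and in this multiplier-Carleson norm, with the interplay between separation and the Carleson condition, is where essentially all of the technical work would lie.
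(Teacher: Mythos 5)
The first thing to say is that the paper does not prove this statement at all: it is Theorem~A, quoted as a known result from Arcozzi--Blasi--Pau \cite{ABP}, and used only as an input (the theorem the paper actually proves, Theorem~\ref{mainresult}, concerns the complementary range $\max\{s,1-s\}<p\le 1$). So there is no proof in the text to compare with, and your proposal must stand on its own. As written it does not: it is a plan in which both implications defer exactly the steps that carry the mathematical content.

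In (a)$\Rightarrow$(b), the open-mapping argument and the separation of $\{z_n\}$ are fine, but the Carleson-measure condition is dispatched with ``by a duality argument from the boundedness of the extension operator,'' and no such argument is given. The argument you are gesturing at is Hilbertian: for $p=2$ one uses that the adjoint of multiplication by $f$ acts on reproducing kernels by $M_f^*k_{z_n}=\overline{f(z_n)}\,k_{z_n}$, interpolates random signs, and averages (Khintchine) to obtain $\sum_n|g(z_n)|^2(1-|z_n|^2)^s\lesssim\|g\|_{B_2(s)}^2$. For $p\neq 2$ there is no reproducing-kernel identity to exploit, and converting the $\ell^\infty$ statement ``every bounded sequence is interpolated by a multiplier of norm at most $C$'' into the weighted $\ell^p$ restriction bound valid for all of $B_p(s)$ is precisely the hard half of the necessity proof in \cite{ABP}; the word ``duality'' does not supply it.

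In (b)$\Rightarrow$(a), your reduction to uniform separation by testing on $f_a$ is correct, but the $\bar\partial$-scheme is only set up, and you yourself concede that proving $f=G-u\in M(B_p(s))$ is ``where essentially all of the technical work would lie.'' That concession is the gap, and it is a genuinely hard one in this range of parameters: for $1<p<\infty$ and $0<s<1$, Carleson measures for $B_p(s)$ are \emph{not} characterized by a box-type geometric condition (capacitary or testing conditions are required), so the $\bar\partial$-estimates cannot be closed with Jones-type pointwise bounds as in the $H^\infty$ or $BMOA$ interpolation theorems; one must bound $\int_\D|h(z)|^p|f'(z)|^p(1-|z|^2)^{p-2+s}\,dA(z)$ for every $h\in B_p(s)$ using the full strength of hypothesis (b), and no such estimate is formulated, let alone proved. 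Note the contrast with the paper's own Theorem~\ref{mainresult} for $p\le 1$: there Wu's theorem (Theorem~\ref{carlesonBps}(b)) converts ``Carleson measure for $B_p(s)$'' into the $s$-Carleson box condition, and Earl's constructive $H^\infty$ interpolation combined with the P\'erez-Gonz\'alez--R\"atty\"a description of inner functions in $F(p,p-2,s)$ bypasses $\bar\partial$ entirely. Neither simplification is available for $1<p<\infty$, which is why the steps you have deferred amount to the whole theorem.
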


Ch. Yuan and C. Tong \cite{YT} characterized interpolating sequences for $F(p, p-2, s)\cap H^{\infty}$ which generalizes the case of $\Q_s\cap H^{\infty}$ in \cite{NX}. We recall Ch. Yuan and C. Tong's result as follows.

\begin{otherth}\label{YT}
Suppose  $1<p<\infty$, $0<s<1$ and $\{z_n\}_{n=1}^{\infty}$ is a sequence in $\D$.  Then the following conditions are equivalent:
\begin{enumerate}
  \item [(a)]$\{z_n\}_{n=1}^{\infty}$ is an interpolating sequence for $F(p, p-2, s)\cap H^{\infty}$;
  \item [(b)]$\{z_n\}_{n=1}^{\infty}$ is a separated sequence  and $\sum_{n=1}^{\infty} (1-|z_n|^2)^s\delta_{z_n}$ is an $s$-Carleson measure.
  \end{enumerate}
\end{otherth}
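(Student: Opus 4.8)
The plan is to establish the equivalence by treating $F(p, p-2, s)\cap H^\infty$ as a subspace of $H^\infty$, so that one half of condition (b) is inherited from the classical Carleson--Garnett theory, while the genuinely new content is the $s$-Carleson condition and the construction of interpolating functions that actually lie in $F(p, p-2, s)$.

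\emph{Necessity.} Since $F(p, p-2, s)\cap H^\infty\subseteq H^\infty$, a sequence that interpolates for the smaller space certainly interpolates for $H^\infty$; by the characterization recalled above (cf.\ \cite{G}) it is therefore separated and $\sum_n(1-|z_n|^2)\delta_{z_n}$ is a Carleson measure. This already yields separation, so it remains to deduce the $s$-Carleson condition. Here I would first invoke the open mapping (closed graph) theorem to produce a finite constant $M$ such that every bounded data sequence $\{\zeta_n\}$ admits an interpolant $f$ with $\|f\|_{F(p, p-2, s)}+\|f\|_\infty\le M\|\{\zeta_n\}\|_\infty$. Fixing an arc $I$ and the point $a_I\in\D$ with $1-|a_I|\approx|I|$ sitting at the top of $S(I)$, I would test the interpolation on random signs $\zeta_n=\varepsilon_n$ supported on $\{z_n\in S(I)\}$ (and $\zeta_n=0$ otherwise). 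Exploiting the M\"obius invariance of $F(p, p-2, s)$ to localize and rescale $S(I)$ to a fixed region, together with a discretized lower bound for the $F$-seminorm in terms of the differences $|f(z_n)-f(z_k)|$, an average over the signs (Khinchine's inequality) should convert the oscillation forced by $\zeta_n=\pm1$ into the estimate $\sum_{z_n\in S(I)}(1-|z_n|^2)^s\lesssim M^p|I|^s$, which is exactly the $s$-Carleson condition; this is the point where the argument of \cite{NX} must be adapted to the present scale.

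\emph{Sufficiency.} The first step is to observe that (b) forces $\{z_n\}$ to be $H^\infty$-interpolating. Indeed, since $1-|z_n|^2\lesssim|I|$ for $z_n\in S(I)$ and $0<s<1$,
$$
\sum_{z_n\in S(I)}(1-|z_n|^2)\le |I|^{1-s}\sum_{z_n\in S(I)}(1-|z_n|^2)^s\lesssim |I|^{1-s}\cdot|I|^s=|I|,
$$
so $\sum_n(1-|z_n|^2)\delta_{z_n}$ is a Carleson measure; with separation, the classical theorem gives an interpolation constant $\delta$. Writing $B$ for the Blaschke product with zeros $\{z_n\}$ and $B_n=B/b_{z_n}$ (so that $|B_n(z_n)|\ge\delta$), I would set
$$
f_n(z)=\left(\frac{1-|z_n|^2}{1-\overline{z_n}z}\right)^{1+t}\frac{B_n(z)}{B_n(z_n)}
$$
for a parameter $t>0$ to be fixed, so that $f_n(z_k)=\delta_{nk}$, and define $f=\sum_n\zeta_n f_n$. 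The bound $\sum_n|f_n(z)|\lesssim1$ (valid because the factors $\big((1-|z_n|^2)/|1-\overline{z_n}z|\big)^{1+t}$ decay off $z_n$ and $\{z_n\}$ is a Carleson sequence) shows $f\in H^\infty$ and solves the interpolation problem.

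The crux of the whole argument is to verify that $f\in F(p, p-2, s)$, i.e.\ that $d\mu_f(z)=|f'(z)|^p(1-|z|^2)^{p-2+s}\,dA(z)$ is an $s$-Carleson measure. I would fix an arc $I$ and estimate $\int_{S(I)}d\mu_f$. After reducing, via Hölder's inequality together with a randomization (Khinchine) argument, the estimate to the square-function integral $\int_{S(I)}\big(\sum_n|\zeta_n|^2|f_n'(z)|^2\big)^{p/2}(1-|z|^2)^{p-2+s}\,dA(z)$, and then using the pointwise decay of the weights in $f_n$, the separation of $\{z_n\}$ to control the overlap of the regions where each $|f_n'|$ is concentrated, and choosing $t$ large enough, this should reduce to a sum of the form $\sum_{z_n\in S(I)}(1-|z_n|^2)^s$, which is $\lesssim|I|^s$ precisely by hypothesis. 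This Carleson-measure verification --- balancing the exponent $t$, handling the off-diagonal cross terms, and matching the weights $(1-|z|^2)^{p-2+s}$ against the mass $(1-|z_n|^2)^s$ --- is the main obstacle; the separation and $s$-Carleson hypotheses enter exactly here, and together they close the estimate and complete the proof.
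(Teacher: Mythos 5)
First, a point of orientation: the paper does not prove this statement at all --- it is quoted as Theorem~\ref{YT} from \cite{YT}. The closest thing to ``the paper's own proof'' is its proof of Theorem~\ref{mainresult}, the analogue for $\max\{s,1-s\}<p\le 1$, whose method in fact works verbatim for $1<p<\infty$ and repairs exactly the places where your argument is incomplete.

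The genuine gap is in your sufficiency direction, which is where the whole difficulty of the theorem lives. You construct $f=\sum_n\zeta_n f_n$ with
$f_n(z)=\bigl((1-|z_n|^2)/(1-\overline{z_n}z)\bigr)^{1+t}B_n(z)/B_n(z_n)$, which is fine for the $H^\infty$ part, but the verification that $f\in F(p,p-2,s)$ is only a plan, and the one concrete tool you name --- Khinchine's inequality --- points in the wrong logical direction. Khinchine controls the \emph{expectation over random signs} of $\int|\sum_n\varepsilon_n\zeta_n f_n'|^p$; sufficiency requires an upper bound for \emph{every fixed} bounded sequence $\{\zeta_n\}$, and no comparison of a fixed-coefficient $L^p$ norm with the randomized average is available in that direction. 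Moreover you never address the term of $f_n'$ coming from $B_n'$: one has $|B_n'(z)|\le\sum_{k\ne n}(1-|z_k|^2)/|1-\overline{z_k}z|^2$, an infinite sum over the other zeros, and matching these cross terms against the weight $(1-|z|^2)^{p-2+s}$ to produce the bound $\lesssim|I|^s$ is precisely the hard estimate; it is absent. The necessity direction has the same character: the key inequality --- that a function of uniformly bounded $F(p,p-2,s)\cap H^\infty$ norm interpolating $\pm1$ on the points of $S(I)$ forces $\sum_{z_n\in S(I)}(1-|z_n|^2)^s\lesssim|I|^s$ --- is asserted (``should convert the oscillation \dots into the estimate'') rather than proved, and there is an additional structural problem: the interpolation operator is not linear, so the interpolant has no canonical dependence on the signs $\varepsilon_n$ to which a Khinchine average of the seminorm could be applied.

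For comparison, here is how the paper handles its own Theorem~\ref{mainresult}, and this route eliminates your crux entirely. For sufficiency, do not build the interpolant as $\sum_n\zeta_n f_n$. Once (b) gives that $\{z_n\}$ is $H^\infty$-interpolating (your computation of that step is correct), Earl's constructive theorem \cite{E} produces an interpolant of the special form $f(z)=C\sup_n\{|a_n|\}\,B(z)$, where $B$ is a Blaschke product whose zeros $\{\zeta_n\}$ satisfy $\rho(z_n,\zeta_n)\le\delta/3$. Then $1-|\zeta_n|\thickapprox 1-|z_n|$ and $|1-\overline{a}\zeta_n|\thickapprox|1-\overline{a}z_n|$, so $\{\zeta_n\}$ inherits the $s$-Carleson condition from $\{z_n\}$, and the P\'erez-Gonz\'alez--R\"atty\"a characterization of inner functions in $F(p,p-2,s)$ (Theorem~\ref{F-Inner}, valid for all $p>\max\{s,1-s\}$, hence for all $p>1$) gives $B\in F(p,p-2,s)$. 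Thus $f\in F(p,p-2,s)\cap H^\infty$, and the entire Carleson-measure verification that stalls your proof is replaced by a few lines. For necessity, when $1<p\le2$ one can also avoid a testing argument: $F(p,p-2,s)\subseteq F(2,0,s)=\Q_s$ (cf.\ \cite{BP}), so an interpolating sequence for $F(p,p-2,s)\cap H^\infty$ is interpolating for $\Q_s\cap H^\infty$, and the $p=2$ case of \cite{NX} gives (b). For $p>2$ the inclusion runs the other way and a direct oscillation argument of the type you sketch is indeed what \cite{YT} must carry out --- but that argument is the substance of the direction and has to be written in full, not gestured at.
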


In light of   Theorems \ref{ABP} and \ref{YT},  it is quite  natural to characterize     interpolating sequences for $M(B_p(s))$ and $F(p, p-2, s)\cap H^{\infty}$ when $0<s<1$ and $1-s<p \leq 1$.
In this paper, we get the corresponding result for  $0<s<1$ and  $\max\{s, 1-s\}<p\leq 1$. In fact, we observe that  $F(p, p-2, s)\cap H^{\infty}=M(B_p(s))$ when $0<p\leq 1$ and  $1-p<s\leq 1$.
For $0<s<1$ and certain $\{z_n\}_{n=1}^{\infty}$ in $\D$, $s$-Carleson measures $\sum_{n=1}^{\infty} (1-|z_n|^2)^s\delta_{z_n}$  play an important role in our interpolating theorem. This inspires us to study them further.  In Section 3, we consider these $s$-Carleson measures via an $L^p$ description. In Section 4, we apply these $s$-Carleson measures to investigate proper  inclusion relation related to the closure of $F(p, p-2, s)$  in the Bloch space.

Throughout this paper,  we  write $a\lesssim b$ if there exists a positive constant $C$ such that $a\leq Cb$.
 The symbol $a\thickapprox b$ means that $a\lesssim b\lesssim a$.

\section{Interpolating sequence for $M(B_p(s))$ and $F(p, p-2, s)\cap H^{\infty}$  with small exponents }

Suppose $0<s<1$ and  $ 1-s<p\leq 1$. The function $\log(1-z) \in F(p, p-2, s)\backslash H^\infty$. From \cite[Proposition A]{PR},  there exists a function in $H^\infty\backslash F(p, p-2, s)$.
Hence $F(p, p-2, s)\nsubseteq H^\infty$ and $H^\infty \nsubseteq F(p, p-2, s)$.

The main result in this section  is the following theorem.

\begin{thm} \label{mainresult}
Let $0<s<1$, $\max\{s, 1-s\}<p\leq 1$ and $t>0$. Suppose $\{z_n\}_{n=1}^{\infty}$ is a sequence in $\D$. Then the following conditions are equivalent:
\begin{enumerate}
\item [(a)] $\{z_n\}_{n=1}^{\infty}$ is an interpolating sequence for $M(B_p(s))$;
  \item [(b)]$\{z_n\}_{n=1}^{\infty}$ is an interpolating sequence for $F(p, p-2, s)\cap H^{\infty}$;
  \item [(c)]$\{z_n\}_{n=1}^{\infty}$ is a separated sequence  and $\sum_{n=1}^{\infty} (1-|z_n|^2)^s\delta_{z_n}$ is an $s$-Carleson measure;
  \item [(d)] $\{z_n\}_{n=1}^{\infty}$ is a separated sequence  and
  $$
  \sup_{a\in \D} \sum_{n=1}^{\infty} \frac{(1-|a|^2)^t(1-|z_n|^2)^s }{|1-\overline{a}z_n|^{s+t}}<\infty.
  $$
  \end{enumerate}
\end{thm}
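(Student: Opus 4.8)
The plan is to prove the four conditions equivalent by first disposing of the two ``soft'' equivalences (a)$\Leftrightarrow$(b) and (c)$\Leftrightarrow$(d), and then closing the loop with (b)$\Rightarrow$(c)$\Rightarrow$(b). The equivalence (a)$\Leftrightarrow$(b) is immediate: in the present range $0<p\le1$, $1-p<s\le1$, the observation recorded just before the theorem gives $M(B_p(s))=F(p,p-2,s)\cap H^\infty$, so the two spaces coincide and therefore share the same interpolating sequences. For (c)$\Leftrightarrow$(d) I would apply the standard characterization of $s$-Carleson measures to the discrete measure $\mu=\sum_n(1-|z_n|^2)^s\delta_{z_n}$, for which $\int_\D\frac{(1-|a|^2)^t}{|1-\overline a z|^{s+t}}\,d\mu(z)=\sum_n\frac{(1-|a|^2)^t(1-|z_n|^2)^s}{|1-\overline a z_n|^{s+t}}$. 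The implication (d)$\Rightarrow$(c) follows by restricting the integral to $z\in S(I)$ and taking $a=(1-|I|)\zeta_I$, where $|1-\overline a z|\approx|I|$ and $(1-|a|^2)^t\approx|I|^t$ on $S(I)$; the reverse (c)$\Rightarrow$(d) follows by splitting $\D$ into the dyadic Carleson boxes associated with $a$, bounding $\mu$ on each by the $s$-Carleson estimate, and summing a geometric series whose convergence is guaranteed by $t>0$ (which also shows the condition is independent of the particular $t>0$). Since separation is common to (c) and (d), nothing more is needed.

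For the necessity (b)$\Rightarrow$(c), I would first note that $X:=F(p,p-2,s)\cap H^\infty$ is a complete $p$-Banach space on which the evaluation $f\mapsto\{f(z_n)\}$ is bounded into $\ell^\infty$; assuming (b) this map is onto, so the open mapping theorem furnishes a constant $M$ and, for each $\zeta\in\ell^\infty$, an interpolant $f$ with $\|f\|_X\le M\|\zeta\|_\infty$. Separation is then immediate: interpolating $0$ at $z_n$ and $1$ at $z_k$ produces $f\in H^\infty$ with controlled norm, and the Schwarz--Pick inequality turns $|f(z_k)-f(z_n)|=1$ into a uniform lower bound for $\rho(z_n,z_k)$. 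The $s$-Carleson condition I would obtain following the scheme of Theorems \ref{ABP} and \ref{YT}: feeding the interpolation operator data localized on the indices with $z_n\in S(I)$ (random signs are convenient) and using that the interpolants lie uniformly in $F(p,p-2,s)$, one passes to the associated $s$-Carleson derivative measures and, via the separation, reduces matters to a single Carleson box to read off $\sum_{z_n\in S(I)}(1-|z_n|^2)^s\lesssim|I|^s$. The quasi-Banach range $p\le1$ enters only through the $p$-homogeneity of the quasi-norm; the one genuinely delicate point is that one must exploit the oscillation controlled by the $F$-seminorm rather than the mere size of the interpolant, since $(1-|z|^2)^{s-2}\,dA$ has infinite mass over a Carleson box.

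The main obstacle is the sufficiency (c)$\Rightarrow$(b). I would construct the interpolant as an atomic series $f(z)=\sum_n\zeta_n\big((1-|z_n|^2)/(1-\overline{z_n}z)\big)^{b}$ with the exponent $b=b(p,s)$ chosen large enough that, using the equivalent form (d), both $f\in H^\infty$ and $|f'(z)|^p(1-|z|^2)^{p+s-2}\,dA(z)$ is an $s$-Carleson measure, so that $f\in X$ with $\|f\|_X\lesssim\|\zeta\|_\infty$. Throughout, since $p\le1$, I would systematically replace the triangle inequality by the $p$-triangle inequality $\|\sum_n g_n\|^p\le\sum_n\|g_n\|^p$, which here works in our favour. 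The same kernel-sum estimates, together with the separation of $\{z_n\}$, show that $f(z_m)=\zeta_m+e_m$ with the error operator $\zeta\mapsto\{e_m\}$ of small norm on $\ell^\infty$, so that a Neumann-series correction upgrades this approximate interpolation to an exact one while preserving the norm bound. An alternative route is to recognize (c) as the characterization, due to Arcozzi, Blasi and Pau \cite{ABP}, of \emph{universal} interpolating sequences for $B_p(s)$, which supplies a bounded linear interpolation operator into $B_p(s)$ that can then be refined, using the separation and the $s$-Carleson bound, to produce bounded interpolants, i.e. elements of $M(B_p(s))=X$.

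In either route the hardest technical points are the verification that the constructed function actually lies in $F(p,p-2,s)$ --- that is, the $s$-Carleson estimate for its derivative measure --- and the control of the off-diagonal interpolation error in the quasi-Banach regime $p\le1$. Both are ultimately governed by the kernel sums appearing in condition (d), which is precisely why (d) is the convenient form to carry through the estimates; the restriction $\max\{s,1-s\}<p$ is what makes the relevant exponents fall in the range where these kernel sums converge and the $F$-seminorm is nontrivial.
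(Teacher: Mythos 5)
Your reductions (a)$\Leftrightarrow$(b) and (c)$\Leftrightarrow$(d) are fine and agree with the paper, and your plan for (b)$\Rightarrow$(c) is plausible though left as a sketch; note that the paper gets this implication much more cheaply: since $F(p,p-2,s)\subseteq F(2,0,s)$ when $p\le 1<2$, an interpolating sequence for $F(p,p-2,s)\cap H^\infty$ is automatically one for $F(2,0,s)\cap H^\infty=\mathcal{Q}_s\cap H^\infty$, and Theorem \ref{YT} applied with $p=2$ yields (c) at once, with no open-mapping or random-sign argument. The genuine gap is in (c)$\Rightarrow$(b). Your atomic series $f(z)=\sum_n\zeta_n\bigl((1-|z_n|^2)/(1-\overline{z_n}z)\bigr)^{b}$ does not produce a bounded function, no matter how large $b$ is. Take the radial dyadic sequence $z_n=1-2^{-n}$, which is separated and satisfies (c). For $n<m$ one has $1-\overline{z_n}z_m=(1-z_n)+z_n(1-z_m)\approx 1-z_n$, so with $\zeta_n\equiv 1$ the $n$-th term of the series at $z=z_m$ equals $\bigl((1+z_n)/(1+z_n2^{-(m-n)})\bigr)^{b}\approx 2^{b}$; there are $m-1$ such terms, all positive, hence $|f(z_m)|\gtrsim m\,2^{b}\to\infty$. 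For the same reason the ``error'' $e_m$ is not of small norm --- it is not even bounded --- so the Neumann-series correction has nothing to act on. This is not a repairable technicality: a linear scheme producing uniformly bounded interpolants amounts to constructing Beurling-type functions $f_n$ with $f_n(z_m)=\delta_{nm}$ and $\sup_z\sum_n|f_n(z)|<\infty$, which is essentially Carleson's theorem itself, and powers of normalized Szeg\H{o} kernels never have the required summability along a radius. Your fallback route is also unavailable: the Arcozzi--Blasi--Pau interpolation results (including Theorem \ref{ABP}) are proved for $1<p<\infty$, while the whole point of the present theorem is the range $p\le 1$.

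What the paper does for (c)$\Rightarrow$(b) is precisely the device that circumvents this obstruction. From (c), $\sum_n(1-|z_n|^2)\delta_{z_n}$ is a Carleson measure, so $\{z_n\}$ is an $H^\infty$-interpolating sequence with some separation constant $\delta$; Earl's constructive proof of Carleson's theorem then gives an interpolant of the form $f=C\sup_n\{a_n\}B$, where $B$ is a Blaschke product whose zeros $\zeta_n$ satisfy $\rho(z_n,\zeta_n)\le\delta/3$. Boundedness of $f$ is then automatic, and membership $B\in F(p,p-2,s)$ follows by transporting condition (d) (with $t=s$) from $\{z_n\}$ to $\{\zeta_n\}$ via the standard comparison estimates $1-|z_n|\approx1-|\zeta_n|$, $|1-\overline{z_n}w|\approx|1-\overline{\zeta_n}w|$, and then invoking Theorem \ref{F-Inner} of P\'erez-Gonz\'alez--R\"atty\"a on inner functions in $F(p,p-2,s)$; this is exactly where the hypothesis $\max\{s,1-s\}<p$ is consumed. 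If you want to complete your proof, you must replace the atomic series by Earl's construction (or an equivalent honest Blaschke-product interpolant); the rest of your outline can then be salvaged.
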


A function  $f\in H(\D)$  is called an inner function if it is an $H^\infty$-function with radial limits of modulus one almost everywhere on  $\T$.
A sequence $\{a_k\}_{k=1}^\infty$  in $\D$ is said to be a Blaschke sequence if
$$
\sum_{k=1}^\infty (1-|a_k|)<\infty.
$$
This condition  ensures  the convergence of the corresponding Blaschke product $B$ which is
$$
B(z)=\prod_{k=1}^\infty \f{|a_k|}{a_k}\f{a_k-z}{1-\overline{a_k}z}.
$$
Every Blaschke product is an  inner function. A Blaschke product is said to be an interpolating Blaschke product if its  zero sequence is  an interpolating sequence for $H^\infty$.

F. P\'erez-Gonz\'alez and J. R\"atty\"a \cite{PR} described  inner functions in $F(p, p-2, s)$ spaces as follows.

\begin{otherth} \label{F-Inner}
Let $0<s<1$ and $p>\max\{s, 1-s\}$. Then an inner function belongs to $F(p, p-2, s)$ if and only if it is a Blaschke product associated with a sequence $\{z_k\}_{k=1}^\infty\subseteq \D$ which satisfies that
$\sum_k (1-|z_k|)^s \delta_{z_k}$ is an $s$-Carleson measure;  that is
$$
\sup_{a\in \D}\sum_{k=1}^\infty  \left(1-|\sigma_a(z_k)|^2\right)^s<\infty.
$$
\end{otherth}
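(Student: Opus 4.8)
The plan is to translate both sides of the asserted equivalence into statements about $s$-Carleson measures and then prove the resulting measure-theoretic equivalence. On one side, $f\in F(p,p-2,s)$ is exactly the assertion that $d\lambda_f:=|f'(z)|^p(1-|z|^2)^{p-2+s}\,dA(z)$ is an $s$-Carleson measure, because the defining supremum equals $\sup_{a\in\D}\int_\D\frac{(1-|a|^2)^s}{|1-\overline{a}z|^{2s}}\,d\lambda_f(z)$, which is the $t=s$ instance of the standard characterization that $\mu$ is $s$-Carleson iff $\sup_{a}\int_\D\frac{(1-|a|^2)^t}{|1-\overline{a}z|^{s+t}}\,d\mu<\infty$. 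On the other side, the zero condition says that $\mu_s:=\sum_k(1-|z_k|^2)^s\delta_{z_k}$ is $s$-Carleson, and the displayed reformulation $\sup_a\sum_k(1-|\sigma_a(z_k)|^2)^s<\infty$ is again the $t=s$ case of that same characterization. Thus the statement reduces to: for an inner $f$, $\lambda_f$ is $s$-Carleson if and only if $f$ is a Blaschke product whose $\mu_s$ is $s$-Carleson.

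For the sufficiency direction I would start from the elementary derivative bound $|f'(z)|\le\Phi(z):=\sum_k\frac{1-|z_k|^2}{|1-\overline{z_k}z|^2}$ together with the key pointwise inequality $(1-|z|^2)\Phi(z)=\sum_k\bigl(1-|\sigma_{z_k}(z)|^2\bigr)\le\sum_k\bigl(1-|\sigma_z(z_k)|^2\bigr)^s\le C$, where the middle step uses $0\le 1-|\sigma|^2\le1$ and $s<1$, and the last is the hypothesis in its $\sup_a\sum$-form. Then, to bound $\int_{S(I)}\Phi(z)^p(1-|z|^2)^{p-2+s}\,dA$, for $p\ge1$ I would linearize via $\Phi^p=\Phi\cdot\Phi^{p-1}\le C^{p-1}(1-|z|^2)^{1-p}\Phi$, reducing the integrand to $C^{p-1}\Phi(z)(1-|z|^2)^{s-1}$, while for $0<p<1$ I would instead use subadditivity $\Phi^p\le\sum_k(\tfrac{1-|z_k|^2}{|1-\overline{z_k}z|^2})^p$. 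In both cases the weighted box integral is controlled by $\mu_s(S(I))\lesssim|I|^s$ using the Forelli--Rudin estimate
$$\int_\D\frac{(1-|z|^2)^{c}}{|1-\overline{w}z|^{d}}\,dA(z)\approx(1-|w|^2)^{c+2-d},\qquad d>c+2>0,$$
whose hypotheses are guaranteed precisely by $p>\max\{s,1-s\}$. This yields that $\lambda_f$ is $s$-Carleson.

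The necessity direction is where I expect the real difficulty. First one must rule out a nontrivial singular inner factor: its presence would force $\lambda_f$ to violate the $s$-Carleson condition near the carrier of the singular measure, so $f$ is a Blaschke product. To then recover that $\mu_s$ is $s$-Carleson I would bound $\mu_s(S(I))$ from below by $\int_{S(I)}|f'|^p(1-|z|^2)^{p-2+s}\,dA$ through localized estimates near the zeros followed by a stopping-time summation over dyadic scales. The obstacle, and the point that prevents a naive reversal of the sufficiency argument, is that the weighted area integral \emph{under-weights} clustered or high-multiplicity zeros: a zero of multiplicity $m$ contributes mass $m(1-|z_0|^2)^s$ to $\mu_s$ but only an amount of order $m^{1-s}(1-|z_0|^2)^s$ to the integral, so the two quantities are not comparable box-by-box and only their finiteness coincides (since $x\mapsto x^{1-s}$ preserves boundedness). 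Organizing the lower bound therefore requires working along a pseudohyperbolically separated subfamily of zeros at each scale--exploiting that only boundedly many separated points fit in a given Whitney box--and summing across scales to recover the full $s$-Carleson mass; this multiplicity and clustering bookkeeping is the crux of the proof.
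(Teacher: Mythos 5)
First, a point of reference: the paper never proves this statement. It is Theorem B, quoted from P\'erez-Gonz\'alez and R\"atty\"a \cite{PR} and used as a black box in the proofs of Theorem \ref{mainresult} and Theorem \ref{2main}, so there is no internal proof to compare yours against; your argument has to stand on its own. Your reduction of both sides to $s$-Carleson statements is correct (it is the $t=s$ case of Lemma \ref{S-CM}, and the paper itself records that $f\in F(p,p-2,s)$ iff $|f'(z)|^p(1-|z|^2)^{p-2+s}dA$ is $s$-Carleson). Your sufficiency half is also sound, and it is essentially the same computation this paper carries out elsewhere: the bound $|B'(z)|\le\sum_k(1-|z_k|^2)/|1-\overline{z_k}z|^2$, the identity $(1-|z|^2)\Phi(z)=\sum_k\bigl(1-|\sigma_{z_k}(z)|^2\bigr)\le\sup_a\sum_k\bigl(1-|\sigma_a(z_k)|^2\bigr)^s$, then subadditivity for $p\le1$ (as in the proof of Theorem \ref{2main}) or linearization for $p\ge1$ (as in the proof of Lemma \ref{IB}), finished off with the Forelli--Rudin estimate or Lemma \ref{inequ}; the hypotheses $p>s$ and $p+s>1$ enter exactly where you say they do.

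The genuine gap is the necessity half, which is where the entire content of the theorem of \cite{PR} lies, and which you do not prove. Two steps are missing. (i) The exclusion of a nontrivial singular inner factor is asserted in a single clause (``its presence would force $\lambda_f$ to violate the $s$-Carleson condition near the carrier of the singular measure'') with no argument. This is a substantive, theorem-level fact --- for $\Q_s=F(2,0,s)$ it is one of the main points of Ess\'en--Xiao \cite{EX} --- and nothing in your sketch indicates how the violation is produced; one needs quantitative lower bounds for $1-|S_\mu(z)|$ (or for $|S_\mu'(z)|$) on regions of controlled size approaching the support of the singular measure, and then an estimate showing the resulting contribution to $\sup_a\int_\D|f'|^p(1-|z|^2)^{p-2}(1-|\sigma_a(z)|^2)^s\,dA$ is infinite. (ii) For the Blaschke part, the passage from the $s$-Carleson property of $|B'|^p(1-|z|^2)^{p-2+s}dA$ to that of $\sum_k(1-|z_k|^2)^s\delta_{z_k}$ is described only as ``localized estimates near the zeros followed by a stopping-time summation over dyadic scales,'' and you yourself label the multiplicity/clustering bookkeeping ``the crux'' --- that is, the crux is precisely what is left undone. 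A workable starting point is the pointwise inequality $1-|B(z)|^2\ge 1-\rho(z,z_k)^2$ (valid for every single zero $z_k$, since $|B(z)|\le\rho(z,z_k)$), combined with lower bounds for $|B'|$ on pseudohyperbolic annuli around suitably separated subfamilies of zeros, but some such estimate must actually be executed and summed over a covering of $S(I)$. As written, the necessity direction is a plan with its hardest two steps acknowledged but not performed, so the proposal does not constitute a proof of the stated equivalence.
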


We also  recall the following well-known description of Carleson type measures  (cf. \cite{Bla} ).
\begin{otherl}\label{S-CM}
Suppose $s>0$, $t>0$ and $\mu$ is a nonnegative Borel measure on $\D$. Then $\mu$ is an $s$-Carleson measure if and only if
$$
\sup_{z\in \D}\int_{\D} \frac{(1-|z|^2)^t}{|1-\overline{z}w|^{s+t}}d\mu(w)<\infty.
$$
\end{otherl}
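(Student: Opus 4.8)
The plan is to establish the two implications separately. The direction that the integral condition implies the $s$-Carleson condition is a short test-point argument, while the converse requires a dyadic decomposition of $\D$ adapted to the point $z$, with the parameter $t>0$ entering only through the convergence of a geometric series.

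For the first direction, suppose the integral is bounded by $M$ uniformly in $z$. Fix an arc $I\subseteq \T$ with center $\zeta_0\in \T$ and $|I|=h$, and choose the test point $z=(1-h)\zeta_0$, so that $1-|z|^2\thickapprox h$. A direct estimate shows $|1-\overline{z}w|\lesssim h$ for every $w\in S(I)$, since both $1-(1-h)|w|\thickapprox h$ and the angular deviation $|\arg(\overline{z}w)|\lesssim h$ there. Consequently the integrand satisfies $\frac{(1-|z|^2)^t}{|1-\overline{z}w|^{s+t}}\gtrsim h^{-s}$ on $S(I)$, and restricting the integral to $S(I)$ gives $\mu(S(I))/|I|^s\lesssim M$. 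Taking the supremum over all arcs $I$ shows $\mu$ is an $s$-Carleson measure.

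For the converse, assume $\sup_I \mu(S(I))/|I|^s=C_\mu<\infty$; the case $z=0$ follows at once by taking $I=\T$, so fix $z\neq 0$, put $h=1-|z|$ and $\zeta=z/|z|$. Since $|1-\overline{z}w|\ge 1-|z|=h$ and $|1-\overline{z}w|\le 2$ throughout $\D$, I would decompose $\D$ into $R_0=\{w:|1-\overline{z}w|<2h\}$ and $R_k=\{w:2^k h\le |1-\overline{z}w|<2^{k+1}h\}$ for $k\ge 1$, only finitely many of which meet $\D$. On $R_k$ the integrand is comparable to $2^{-k(s+t)}h^{-s}$. The geometric heart of the argument is the containment $\{w:|1-\overline{z}w|<\delta\}\subseteq S(J)$, where $J$ is the arc centered at $\zeta$ with $|J|\thickapprox \delta$, valid for $\delta\gtrsim h$. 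Applying the $s$-Carleson hypothesis to this box yields $\mu(R_k)\le \mu(\{w:|1-\overline{z}w|<2^{k+1}h\})\lesssim C_\mu (2^k h)^s$, whence $\int_{R_k}\lesssim C_\mu\, 2^{-kt}$. Summing over $k\ge 0$ produces the bound $\int_\D \frac{(1-|z|^2)^t}{|1-\overline{z}w|^{s+t}}\,d\mu(w)\lesssim C_\mu\sum_{k\ge 0}2^{-kt}\lesssim C_\mu$, independently of $z$.

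The main obstacle is precisely the geometric containment of the sublevel sets $\{w:|1-\overline{z}w|<\delta\}$ inside Carleson boxes of comparable size, together with the companion lower bound used in the first direction; both follow from the elementary boundary comparison $|1-\overline{z}w|\thickapprox (1-|z|)+(1-|w|)+|\arg(\overline{z}w)|$, but this must be recorded with the correct uniformity in $z$ (and one checks separately the borderline scale $\delta\thickapprox 1$, where $S(J)\thickapprox \D$ and the estimate is trivial). Once this comparison is in hand the remaining steps are routine, and the hypothesis $t>0$ is used only to force $\sum_k 2^{-kt}$ to converge.
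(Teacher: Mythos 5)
Your proof is correct, and you have flagged the only delicate points: the uniform comparison $|1-\overline{z}w|\thickapprox (1-|z|)+(1-|w|)+|\arg(\overline{z}w)|$ underlying both the lower bound on $S(I)$ and the containment of the sublevel sets in Carleson boxes, the borderline scale $\delta\thickapprox 1$ (equivalently the case $z=0$, where the bound reduces to $\mu(\D)<\infty$), and the fact that $t>0$ enters only through the convergence of $\sum_k 2^{-kt}$. Note that the paper gives no proof of this statement---it is quoted as a well-known description of Carleson type measures with a reference to Blasco \cite{Bla}---so there is nothing internal to compare against; your test-point choice $z=(1-|I|)\zeta_0$ for one direction and the dyadic decomposition of $\D$ into the regions $R_k=\{w: 2^kh\le |1-\overline{z}w|<2^{k+1}h\}$ for the other is precisely the classical argument from the literature.
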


The following (a) and (b) of Theorem \ref{carlesonBps}  are from  Theorem 3.3 and Theorem 4.2 in  \cite{Wu},  respectively.

\begin{otherth} \label{carlesonBps}
The following statements are true:
\begin{enumerate}
  \item [(a)] let  $0<p<\infty$, $0\leq s<\infty$ and $p+s>1$. Then $g\in M(B_p(s))$ if and only if $g\in H^{\infty}$ and $|g'(z)|^p(1-|z|^2)^{p-2+s}dA(z)$ is a Carleson measure for $B_p(s)$;
  \item [(b)] let  $0<p\leq 1$ and  $1-p<s\leq 1$. Then a nonnegative  Borel measure $\mu$ on $\D$ is  a Carleson measure for  $B_p(s)$ if and only if
  $$
  \int_\D \f{d\mu(w)}{|1-\overline{w}z|^2}=O((1-|z|^2)^{s-2}).
  $$
\end{enumerate}
\end{otherth}

When $0<p<\infty$ and $s>1$, recall that $F(p, p-2, s)$ is the Bloch space $\B$, and $B_p(s)$ is the Bergman space $A^p_{s-2}$ (cf. \cite{Zhu}). Hence, for $0<p<\infty$ and $s>1$,
$M(B_p(s))=F(p, p-2, s)\cap H^{\infty}=H^{\infty}$.

From \cite[Theorem 5.2]{ALXZ}, if $0<s\leq 1$, then $M(B_2(s))\subseteq \Q_s$.
By  \cite[Lemma 5.1]{ABP}, if $1<p\leq 2$ and $0<s<1$, then  $M(B_p(s))\subseteq \Q_s\cap H^{\infty}$.

Suppose  $1<p\leq 2$ and $0<s<1$.   Proposition \ref{M(Bps) F} below gives $M(B_p(s))\subseteq F(p, p-2, s)\cap H^{\infty}$. It is known that (cf. \cite[Remark 2]{Ye}) $F(p, p-2, s)\cap H^{\infty}\subseteq \Q_s\cap H^{\infty}$.
Thus we also get that  $M(B_p(s))\subseteq \Q_s\cap H^{\infty}$ when $1<p\leq 2$ and $0<s<1$.

\begin{prop} \label{M(Bps) F}
The following  statements hold:
\begin{enumerate}
  \item [(a)] if  $0<p\leq 1$ and  $1-p<s\leq 1$, then $$M(B_p(s))= F(p, p-2, s)\cap H^{\infty}; $$
  \item [(b)] if $p>1$ and $0<s\leq 1$, then $$M(B_p(s))\subseteq F(p, p-2, s)\cap H^{\infty}. $$
\end{enumerate}
 \end{prop}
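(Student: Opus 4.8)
The plan is to reduce both parts to a comparison, for the induced measure $d\mu_g(z)=|g'(z)|^p(1-|z|^2)^{p-2+s}\,dA(z)$, between the property of being a Carleson measure for $B_p(s)$ and that of being an $s$-Carleson measure. By the very definition of $F(p, p-2, s)$, a function $g$ lies in $F(p, p-2, s)$ exactly when $\mu_g$ is an $s$-Carleson measure, while Theorem \ref{carlesonBps}(a) says that $g\in M(B_p(s))$ exactly when $g\in H^{\infty}$ and $\mu_g$ is a Carleson measure for $B_p(s)$. Hence both assertions of the proposition become statements relating these two measure-theoretic conditions, and the two parts differ only in which tools are available to carry out that comparison.

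For part (a), where $0<p\le 1$ and $1-p<s\le 1$, I would invoke the explicit description in Theorem \ref{carlesonBps}(b): a nonnegative Borel measure $\mu$ is a Carleson measure for $B_p(s)$ if and only if $\int_\D \frac{d\mu(w)}{|1-\overline{w}z|^2}=O((1-|z|^2)^{s-2})$, that is, $\sup_{z\in\D}(1-|z|^2)^{2-s}\int_\D \frac{d\mu(w)}{|1-\overline{w}z|^2}<\infty$. Applying Lemma \ref{S-CM} with the choice $t=2-s$ (admissible since $2-s>0$ for $s\le 1$) shows that this is literally the condition for $\mu$ to be an $s$-Carleson measure. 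Thus, in this parameter range, Carleson measures for $B_p(s)$ and $s$-Carleson measures are the same class. Specializing $\mu=\mu_g$ and combining with Theorem \ref{carlesonBps}(a) gives the chain $g\in M(B_p(s))\iff g\in H^\infty$ and $\mu_g$ is an $s$-Carleson measure $\iff g\in F(p, p-2, s)\cap H^{\infty}$, which is the claimed equality.

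For part (b), where $p>1$, no such closed description of Carleson measures for $B_p(s)$ is at hand, so I would argue directly, via a standard test family, that ``$\mu_g$ a Carleson measure for $B_p(s)$'' forces ``$\mu_g$ an $s$-Carleson measure.'' For $a\in\D$ and a fixed exponent $b>s/p$, set $f_a(z)=\bigl((1-|a|^2)/(1-\overline{a}z)\bigr)^{b}$. A routine computation using the asymptotic $\int_\D \frac{(1-|z|^2)^c}{|1-\overline{a}z|^d}\,dA(z)\thickapprox (1-|a|^2)^{c+2-d}$, valid here with $c=p-2+s>-1$ (since $p+s>1$) and $d=(b+1)p>c+2$ (since $bp>s$), yields $\|f_a\|_{B_p(s)}^p\lesssim (1-|a|^2)^{s}$, the point-value term $|f_a(0)|^p=(1-|a|^2)^{bp}$ being absorbed because $bp>s$. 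On the Carleson box $S(I_a)$ over the arc $I_a$ centered at $a/|a|$ with $|I_a|\thickapprox 1-|a|$ one has $|1-\overline{a}z|\thickapprox 1-|a|^2$, whence $|f_a|\thickapprox 1$ there. Since $\mu_g$ is a Carleson measure for $B_p(s)$,
$$
\mu_g(S(I_a))\lesssim \int_\D |f_a|^p\,d\mu_g\lesssim \|f_a\|_{B_p(s)}^p\lesssim (1-|a|^2)^{s}\thickapprox |I_a|^{s},
$$
and since every arc occurs (up to a bounded factor) as some $I_a$, we conclude that $\mu_g$ is an $s$-Carleson measure, i.e. $g\in F(p, p-2, s)$; together with $g\in H^\infty$ this gives $M(B_p(s))\subseteq F(p, p-2, s)\cap H^{\infty}$.

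The main obstacle is concentrated in part (b): because the clean measure-class identification of part (a) is unavailable for $p>1$, one must run the test-function estimate by hand, and the delicate points are verifying the admissibility conditions $p+s>1$ and $b>s/p$ for the integral asymptotics, checking that the point-evaluation term is dominated by $(1-|a|^2)^s$ near the boundary, and matching the norm upper bound with the box lower bound uniformly in $a$. By contrast, once the equivalence of the two measure classes is established, part (a) follows with no further work.
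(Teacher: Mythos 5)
Your proposal is correct, and its overall architecture matches the paper's: part (a) is proved exactly as in the paper (Theorem \ref{carlesonBps}(a)+(b) combined with Lemma \ref{S-CM} at $t=2-s$ to identify Carleson measures for $B_p(s)$ with $s$-Carleson measures), and part (b) is proved by the same basic idea of testing the $B_p(s)$-Carleson inequality against functions peaking at $a$. The execution of part (b) differs in the details, though. The paper takes the specific test family $f_a(z)=(1-|a|^2)^{s/p}/(1-\overline{a}z)^{2s/p}$, whose point is that $|f_a(z)|^p$ is exactly the kernel $(1-|a|^2)^s/|1-\overline{a}z|^{2s}$; multiplying by $(1-|z|^2)^{p-2+s}$ this reproduces, up to the identity $(1-|\sigma_a(z)|^2)^s=(1-|a|^2)^s(1-|z|^2)^s/|1-\overline{a}z|^{2s}$, the $F(p,p-2,s)$ seminorm integrand, so the conclusion $g\in F(p,p-2,s)$ drops out in one line once the uniform bound $\sup_a\|f_a\|_{B_p(s)}<\infty$ is known (the paper cites Blasi--Pau for that). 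Your version uses a general exponent $b>s/p$, proves the norm bound $\|f_a\|_{B_p(s)}^p\lesssim(1-|a|^2)^s$ by hand, and then passes through the geometric box definition: $\mu_g(S(I_a))\lesssim|I_a|^s$, hence $\mu_g$ is an $s$-Carleson measure, hence $g\in F(p,p-2,s)$. This is sound, and has the virtue of being self-contained (no citation needed for the norm bound), but it adds two small steps you should make explicit: the covering of \emph{all} arcs by the $I_a$, where large arcs are handled by $\mu_g(\T$-independent$)\le\mu_g(\D)<\infty$ (true because $1\in B_p(s)$ forces $M(B_p(s))\subseteq B_p(s)$, so $g\in B_p(s)$), and the equivalence of the box definition of $s$-Carleson measures with the M\"obius-invariant formulation of membership in $F(p,p-2,s)$, which the paper's choice of test function bypasses entirely.
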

\begin{proof} (a)\
 From  Theorem \ref{carlesonBps}, $g\in M(B_p(s))$ if and only if $g\in H^{\infty}$  and
\begin{equation}\label{Fconditon}
  \int_\D \f{|g'(w)|^p(1-|w|^2)^{p-2+s}dA(w)}{|1-\overline{w}z|^2}=O((1-|z|^2)^{s-2}).
\end{equation}
Taking  $t=2-s$ and $d\mu(w)=|g'(w)|^p(1-|w|^2)^{p-2+s}dA(w)$ in Lemma \ref{S-CM}, we see that (\ref{Fconditon}) holds if and only if
$|g'(w)|^p(1-|w|^2)^{p-2+s}dA(w)$ is an $s$-Carleson measure. In other words, (\ref{Fconditon}) holds if and only if $g\in F(p, p-2, s)$.
Hence $M(B_p(s))= F(p, p-2, s)\cap H^{\infty}$.

(b)\ Let $g\in M(B_p(s))$. For $a\in \D$,  set
$$
f_a(z)= \frac{(1-|a|^2)^{s/p}}{(1-\overline{a}z)^{\frac{2s}{p}}}, \ \ z\in \D.
$$
Note that $p>1$ and $0<s\leq 1$. We see that (cf. \cite[p. 407]{BlasiPau})
$$
\sup_{a\in \D} \|f_a\|_{B_p(s)}<\infty.
$$
Bear in mind that $|g'(z)|^p(1-|z|^2)^{p-2+s}dA(z)$ is a Carleson measure for $B_p(s)$. Then
\begin{eqnarray*}
\int_\D  |f_a(z)|^p |g'(z)|^p(1-|z|^2)^{p-2+s}dA(z) \lesssim \|f_a\|_{B_p(s)}^p
\end{eqnarray*}
for all $a\in \D$. This gives
$$
\sup_{a\in \D} \int_\D  \frac{(1-|a|^2)^{s}}{|1-\overline{a}z|^{2s}} |g'(z)|^p(1-|z|^2)^{p-2+s}dA(z)<\infty.
$$
Thus $g\in F(p, p-2, s)$.  Hence $M(B_p(s))\subseteq F(p, p-2, s)\cap H^{\infty}$.
\end{proof}

\vspace{0.1truecm}

\noindent{\bf Proof of Theorem \ref{mainresult}.}
The equivalence of (a) and (b) is from Proposition  \ref{M(Bps) F}. By Lemma \ref{S-CM},  (c) and (d) are also equivalent.

(b)$\Rightarrow$(c). Note that $0<s<1$ and $\max\{s, 1-s\}<p\leq 1$. By a simple observation (cf. \cite[p. 207]{BP}), we see that $F(p, p-2, s)\subseteq F(2, 0, s)$.
Since $\{z_n\}_{n=1}^{\infty}$ is an interpolating sequence for $F(p, p-2, s)\cap H^{\infty}$, $\{z_n\}_{n=1}^{\infty}$ is also an interpolating sequence for $F(2, 0, s) \cap H^{\infty}$.
By Theorem \ref{YT}, we get that (c) holds.

(c)$\Rightarrow$(b). Suppose $\{z_n\}_{n=1}^{\infty}$ is a separated sequence  and $\sum_{n=1}^{\infty} (1-|z_n|^2)^s\delta_{z_n}$ is an $s$-Carleson measure. Then
$$
\sum_{z_n \in S(I)} (1-|z_n|^2)\leq |I|^{1-s}\sum_{z_n \in S(I)}(1-|z_n|^2)^s\lesssim |I|
$$
for all $I\subseteq \T$. Hence $\sum_{n=1}^{\infty} (1-|z_n|^2)\delta_{z_n}$ is a Carleson measure. Thus $\{z_n\}_{n=1}^{\infty}$ is an interpolating sequence for $H^{\infty}$.
Then
$$
\inf_k \prod_{j\neq k} \rho(z_j, z_k) =:\delta>0.
$$
Let $\{a_n\}_{n=1}^{\infty}$ be any bounded sequence of complex numbers. From  J. Earl's constructive proof of L. Carleson's interpolating theorem
for $H^\infty$ (see \cite{E}), there exists  a complex  constant $C$ and a Blaschke product $B$ with zeros $\{\zeta_n\}_{n=1}^{\infty}$ such that
$$f(z)=:C\sup_n\{a_n\}B(z)$$ satisfies
$$
f(z_n)=a_n,\ \ n=1, 2,\cdots,
$$
and
\begin{equation}\label{zn zetan}
\rho(z_n, \zeta_n)\leq \frac{\delta}{3}, \  n=1, 2, \cdots.
\end{equation}
It suffices to prove that $B\in F(p, p-2, s)$.  Because of (2), we know from \cite[p. 69]{Zhu} and \cite[Lemma 4.30]{Zhu}  that
\begin{equation}\label{compare1}
1-|z_n|\thickapprox1-|\zeta_n|\thickapprox |1-\overline{z_n} \zeta_n|
\end{equation}
for all positive integers $n$, and
\begin{equation}\label{compare2}
 |1-\overline{z_n} w|\thickapprox  |1-\overline{\zeta_n} w|
\end{equation}
for all positive integers $n$ and all $w\in \D$. Also, the equivalence of (c) and (d) yields
\begin{equation}\label{condition d}
  \sup_{a\in \D} \sum_{n=1}^{\infty} \frac{(1-|a|^2)^s(1-|z_n|^2)^s }{|1-\overline{a}z_n|^{2s}}<\infty.
 \end{equation}
Joining (\ref{compare1}), (\ref{compare2}) and (\ref{condition d}), we get
 $$
  \sup_{a\in \D} \sum_{n=1}^{\infty} \frac{(1-|a|^2)^s(1-|\zeta_n|^2)^s }{|1-\overline{a}\zeta_n|^{2s}}<\infty.
$$
Combining this with Theorem \ref{F-Inner},  we obtain $B\in F(p, p-2, s)$. This finishes the proof.
\hfill{$\square$}

For $0<s\leq 1/2$,  the assumption   $\max\{s, 1-s\}<p\leq 1$ in Theorem \ref{mainresult} is sharp because  $\max\{s, 1-s\}=1-s$ for such  case, and the condition $ 1-s<p\leq 1$
is  necessary  because of  the non-triviality  of $B^p_s$ and $F(p, p-2, s)$. When $1/2<s<1$ and $1-s<p\leq s$, we do not have a good answer for the characterization of  interpolating sequence for $M(B_p(s))$ and
$F(p, p-2, s)\cap H^{\infty}$.

\section{An $L^p$ description related to  $s$-Carleson measure $\sum_{n=1}^{\infty} (1-|z_n|^2)^s\delta_{z_n}$ with $0<s<1$}

Suppose $\{z_n\}_{n=1}^{\infty}$ is a sequence in $\D$. For  $0<s<1$, the $s$-Carleson measure $\sum_{n=1}^{\infty} (1-|z_n|^2)^s\delta_{z_n}$   plays a key role in our Theorem \ref{mainresult}, which characterizes interpolating sequence for $M(B_p(s))$ and $F(p, p-2, s)\cap H^{\infty}$  for  certain range of parameters $p$ and $s$.  In this section, we consider  further this kind of  $s$-Carleson measures. In fact, these $s$-Carleson measures defined by sequences $\{z_n\}_{n=1}^{\infty}$ were  used to characterize inner functions in $\Q_s$ and $F(p, p-2, s)$ spaces (see\cite{EX, PR}), and also used to study solutions of second order complex differential equation having certain pre-given zeros (cf. \cite{Gr, Ye}). We refer to \cite{NX, Qian} for more results associated with these $s$-Carleson measures.

Now we  recall  the classical case of $s=1$. Namely we let  $\{z_n\}_{n=1}^{\infty}$ be a sequence  in $\D$ such that $\sum_{n=1}^{\infty} (1-|z_n|^2)\delta_{z_n}$ is a Carleson measure.  From  \cite{MS}, this is equivalent to say that   $\{z_n\}_{n=1}^{\infty}$ is a  union of finitely many  uniformly separated sequences in $\D$. By an  $L^p$ description,  C. Nolder \cite[Theorem D]{No} characterized the union of finitely many  uniformly separated sequences. We refer to P. Duren and A. Schuster \cite{DS} and  A. Nicolau \cite{Ni} for more interesting results on these sequences.

Give   an $s$-Carleson measure $\sum_{n=1}^{\infty} (1-|z_n|^2)^s\delta_{z_n}$, where  $0<s\leq 1$ and $\{z_n\}_{n=1}^{\infty}$ is certain sequence in $\D$. Then it is easy to see that $\{z_n\}_{n=1}^{\infty}$ is also a Blaschke sequence.
For $s=1$,  C. Nolder  \cite[Theorem D]{No} obtained an $L^p$ characterization related to  these $s$-Carleson measures as follows.

\begin{otherth}\label{Nolder}
Suppose $0<p<2$ and  $\{z_n\}_{n=1}^{\infty}$  is a Blaschke sequence in $\D$. Let $B$ be the Blaschke product whose zero sequence is  $\{z_n\}_{n=1}^{\infty}$.    Then the following conditions are equivalent:
\begin{enumerate}
  \item [(a)] $\sum_{n=1}^{\infty} (1-|z_n|^2)\delta_{z_n}$ is a Carleson measure;
  \item [(b)]
  $$
  \sup_{\phi\in \text{Aut}(\D) } \int_\D \frac{1}{|B(\phi(z))|^p}dA(z)<\infty.
  $$
\end{enumerate}
\end{otherth}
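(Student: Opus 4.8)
The plan is to deduce everything from the Möbius invariance of the pseudo-hyperbolic metric. Writing $B\circ\sigma_a$ out factor by factor and using $\rho(z_n,\sigma_a(z))=\rho(\sigma_a(z_n),z)$, one sees that $|B(\sigma_a(z))|=\prod_n\rho(\sigma_a(z_n),z)$; that is, $B\circ\sigma_a$ is, up to a unimodular constant, the Blaschke product with zero sequence $\{\sigma_a(z_n)\}$. Since every $\phi\in\text{Aut}(\D)$ is $e^{i\theta}\sigma_a$ and the integrand is rotation invariant, condition (b) becomes $\sup_{a\in\D}\int_\D\prod_n\rho(\sigma_a(z_n),z)^{-p}\,dA(z)<\infty$. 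On the other side, taking $s=t=1$ in Lemma \ref{S-CM} and using $1-|\sigma_a(z_n)|^2=(1-|a|^2)(1-|z_n|^2)/|1-\overline a z_n|^2$, condition (a) is equivalent to $\sup_{a\in\D}\sum_n(1-|\sigma_a(z_n)|^2)<\infty$. Because a further composition $\sigma_b\circ\sigma_a$ is again a disk automorphism, the shifted sequences $\{\sigma_a(z_n)\}$ all obey the same bound, so the whole problem is Möbius invariant and it suffices to compare $\int_\D\prod_n\rho(w_n,z)^{-p}\,dA(z)$ with $\sum_n(1-|w_n|^2)$ for an arbitrary Blaschke sequence $\{w_n\}$ subject to the uniform Carleson bound $M$.

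For (b)$\Rightarrow$(a) I would argue by a soft lower bound. From $-\log x\ge 1-x$ one gets $\log\frac1{\rho(w_n,z)}\ge\frac12(1-|\sigma_{w_n}(z)|^2)$, hence, writing $W$ for the Blaschke product with zeros $\{w_n\}$, $|W(z)|^{-p}=\exp\big(p\sum_n\log\frac1{\rho(w_n,z)}\big)\ge 1+\frac p2\sum_n\frac{(1-|w_n|^2)(1-|z|^2)}{|1-\overline{w_n}z|^2}$. Integrating in $z$ and using the elementary fact that $\int_\D\frac{1-|z|^2}{|1-\overline{w}z|^2}\,dA(z)\ge c_0>0$ uniformly in $w$ (bound below by integrating over $|z|<1/2$), we obtain $\int_\D|W(z)|^{-p}\,dA(z)\gtrsim_p\sum_n(1-|w_n|^2)$. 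Applying this to $W=B\circ\sigma_a$ shows that the uniform finiteness in (b) forces $\sup_a\sum_n(1-|\sigma_a(z_n)|^2)<\infty$, which is (a). This direction uses $0<p<2$ only through the integrability that makes (b) meaningful.

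For (a)$\Rightarrow$(b) the idea is to split $\log\frac1{|W(z)|}=\sum_n\log\frac1{\rho(w_n,z)}$ into a far part and a near part at a threshold $\rho=r$. On the far part $\rho(w_n,z)\ge r$ the estimate $-\log(1-x)\le C_r x$ for $0\le x\le 1-r^2$ gives $\sum_{\rho(w_n,z)\ge r}\log\frac1{\rho(w_n,z)}\lesssim_r\sum_n(1-|\sigma_z(w_n)|^2)\le M$, the last inequality being the uniform Carleson bound evaluated at the point $z$; thus the far factors contribute at most a constant $e^{pC_rM/2}$. The near part consists of the zeros with $\rho(w_n,z)<r$, whose number is bounded by $\asymp M$ via the same estimate, so $|W(z)|^{-p}\lesssim\prod_{\rho(w_n,z)<r}\rho(w_n,z)^{-p}$ with a controlled number of singular factors. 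Using the description (cf. \cite{MS}) of (a) as ``$\{z_n\}$ is a finite union of uniformly separated sequences,'' one chooses $r$ below half the separation constant so that each $z$ is near at most one zero; the near part then reduces to the single-factor estimate $\int_{\rho(w_n,z)<r}\rho(w_n,z)^{-p}\,dA(z)\thickapprox(1-|w_n|^2)^2$, valid precisely because $0<p<2$, and summing against $\sum_n(1-|w_n|^2)^2\le\sum_n(1-|w_n|^2)\le M$ closes the estimate uniformly in $a$.

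The main obstacle is exactly this last step: controlling the product of the near factors. The far factors are harmless because the Carleson bound converts their logarithmic sum into the bounded quantity $\sum_n(1-|\sigma_z(w_n)|^2)$, but near a cluster several factors $\rho(w_n,z)^{-p}$ can be simultaneously small, and a crude bound replacing the product by a single factor raised to the number of near zeros would need that number times $p$ to stay below $2$, which the mere Carleson bound does not provide. The separation (finite-union) structure is what rescues the argument, guaranteeing that at a fixed small scale at most one factor is genuinely singular; this is where the hypothesis does real work and where the constants must be tracked with care.
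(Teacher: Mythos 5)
A preliminary remark: the paper contains no proof of Theorem \ref{Nolder}; it is quoted as background (a lettered theorem) from Nolder \cite{No}, so your proposal must stand on its own. Its first half does stand: the argument for (b)$\Rightarrow$(a), via $\log(1/\rho)\ge\tfrac12(1-\rho^2)$, $e^x\ge 1+x$, the uniform lower bound $\int_{|z|<1/2}(1-|z|^2)|1-\overline{w}z|^{-2}\,dA(z)\ge c_0>0$, and Tonelli, correctly gives $\int_\D|B(\sigma_a(z))|^{-p}\,dA(z)\gtrsim_p\sum_n\left(1-|\sigma_a(z_n)|^2\right)$, hence (a) after taking the supremum over $a$ and invoking Lemma \ref{S-CM}.

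The half (a)$\Rightarrow$(b), however, has a genuine gap at precisely the step you flag as the main obstacle, and the gap cannot be closed. You invoke the McDonald--Sundberg description of (a) as a finite union of uniformly separated sequences and claim one can choose $r$ ``below half the separation constant'' so that each $z$ has at most one zero within pseudo-hyperbolic distance $r$. That is false: uniform separation of each subsequence says nothing about the distances between points of \emph{different} subsequences, so the union may contain pairs (indeed $N$-tuples) of zeros at mutual pseudo-hyperbolic distance tending to $0$, or even repeated zeros. Worse, the defect lies in the statement itself, not merely in your proof: for fixed $p\in[1,2)$ take $B=B_0^2$, where $B_0$ has zeros $z_k=1-2^{-k}$ (or, to keep the zeros distinct, adjoin to $\{z_k\}$ points $w_k$ with $\epsilon_k:=\rho(z_k,w_k)\to 0$). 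Then (a) holds, being a sum of two Carleson measures; but every Blaschke factor has modulus at most one, so $|B(w)|\le\rho(z_k,w)\,\rho(w_k,w)$, and with $\phi=\sigma_{z_k}$,
$$
\int_\D \frac{dA(z)}{|B(\phi(z))|^{p}}\ \ge\ \int_{|z|<1/2}\frac{dA(z)}{|z|^{p}\,\rho\bigl(\sigma_{z_k}(w_k),z\bigr)^{p}}\ \thickapprox\
\begin{cases} \epsilon_k^{\,2-2p}, & 1<p<2,\\ \log(1/\epsilon_k), & p=1,\end{cases}
$$
which blows up as $k\to\infty$; for $0<p<1$ the same failure occurs with clusters of $N>2/p$ zeros. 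So for no fixed $p\in(0,2)$ does (a) imply (b). The correct reading of Nolder's theorem quantifies (b) over $p$: (a) is equivalent to ``(b) holds for \emph{some} $p\in(0,2)$'' (equivalently, for all $p$ small relative to the Carleson constant $M$). Under that reading your far/near scheme does succeed: the far part is controlled exactly as you wrote, any point has at most $N\le M/(1-r^2)$ near zeros, and the near product is handled not by a one-singular-factor reduction but by H\"older's inequality,
$$
\int_E\ \prod_{j=1}^{N}\rho(\zeta_j,z)^{-p}\,dA(z)\ \le\ \prod_{j=1}^{N}\left(\int_E\rho(\zeta_j,z)^{-Np}\,dA(z)\right)^{1/N}\ \lesssim\ (1-|\zeta_1|^2)^2,
$$
valid once $Np<2$, after which summing $\sum_n\left(1-|\sigma_a(z_n)|^2\right)\le M$ closes the estimate uniformly in $a$. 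What your argument proves as written is that separated plus Carleson (i.e., $B$ an interpolating Blaschke product) implies (b) for every $p\in(0,2)$ --- a strictly stronger hypothesis than (a).
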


In this section, motivated by   C. Nolder's result above,   we obtain an   $L^p$ description  associated with  these $s$-Carleson measures with  $0<s<1$. As shown by Theorem \ref{2main}  below, the case of $0<s<1$ seems to be more delicate.

Next we recall some well-known estimates (cf. \cite[Lemma 3.10]{Zhu}).

\begin{otherl}\label{usefulestimates}
Suppose $z\in \D$, $c$ is real and  $t>-1$. Then
$$
\int_\D\frac{(1-|w|^2)^t}{|1-\bar{z}w|^{2+t+c}}dA(w)\thickapprox
\begin{cases}1 & \enspace \text{if} \ \ \ c<0,\\
                     \log\frac{2}{1-|z|^2} & \enspace  \text{if} \ \  \ c=0,\\
                     \frac{1}{(1-|z|^2)^c} & \enspace \text{if}\ \  \ c>0,
                   \end{cases}
$$
as $|z|\rightarrow 1^{-}$.
\end{otherl}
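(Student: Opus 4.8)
The plan is to reduce the evaluation of $I(z) = \int_\D \frac{(1-|w|^2)^t}{|1-\bar z w|^{2+t+c}}\, dA(w)$ to the asymptotic analysis of an explicit power series in $|z|^2$. Writing $b = 2+t+c$ and exploiting the factorization $|1-\bar z w|^{b} = (1-\bar z w)^{b/2}(1-z\bar w)^{b/2}$, I would expand each factor by the binomial series $\frac{1}{(1-\bar z w)^{b/2}} = \sum_{n\ge 0} c_n (\bar z w)^n$ with $c_n = \frac{\Gamma(n+b/2)}{\Gamma(b/2)\,\Gamma(n+1)}$, and similarly for the conjugate factor. This presents the kernel as an absolutely convergent double series, with the analytic and anti-analytic parts separated, which is exactly what makes the angular integration tractable.

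Next I would integrate term by term, justified by Tonelli since everything is nonnegative after passing to absolute values. The angular integral annihilates every cross term: $\int_\D w^m \bar w^n (1-|w|^2)^t\, dA(w)=0$ unless $m=n$, and on the diagonal a polar-coordinate change reduces it to a Beta integral, $\int_\D |w|^{2n}(1-|w|^2)^t\, dA(w) = \frac{\Gamma(n+1)\Gamma(t+1)}{\Gamma(n+t+2)}$. Collecting the diagonal contributions yields the closed form $I(z) = \frac{\Gamma(t+1)}{\Gamma(b/2)^2}\sum_{n\ge 0}\frac{\Gamma(n+b/2)^2}{\Gamma(n+1)\Gamma(n+t+2)}\,|z|^{2n}$.

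The heart of the argument is then to pin down the growth of the $n$-th coefficient. Using the uniform two-sided estimate $\frac{\Gamma(n+\alpha)}{\Gamma(n+\beta)}\thickapprox (n+1)^{\alpha-\beta}$ (a consequence of Stirling's formula), I would split $\frac{\Gamma(n+b/2)^2}{\Gamma(n+1)\Gamma(n+t+2)} = \frac{\Gamma(n+b/2)}{\Gamma(n+1)}\cdot\frac{\Gamma(n+b/2)}{\Gamma(n+t+2)}\thickapprox (n+1)^{b/2-1}(n+1)^{b/2-t-2}=(n+1)^{c-1}$, where the exponent collapses to $c-1$ precisely because $b=2+t+c$. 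Hence $I(z)\thickapprox \sum_{n\ge 0}(n+1)^{c-1}|z|^{2n}$, and the lemma follows from the three standard asymptotics of $\sum_{n\ge 0}(n+1)^{c-1}x^n$ as $x\to 1^-$: the sum stays bounded when $c<0$ (the coefficients are then summable), it grows like $\log\frac{1}{1-x}$ when $c=0$, and like $(1-x)^{-c}$ when $c>0$; taking $x=|z|^2$ reproduces the three cases.

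I expect the main obstacle to be the \emph{uniform} two-sided control of the Gamma quotient over all $n\ge 0$, rather than merely its limiting behavior, since the stated equivalence $\thickapprox$ demands honest upper and lower constants independent of $z$ and $n$. A purely asymptotic $\frac{\Gamma(n+\alpha)}{\Gamma(n+\beta)}\sim n^{\alpha-\beta}$ only governs the tail, so I would supplement it by noting that each factor is positive and bounded above and below for the finitely many small $n$, thereby obtaining clean comparison constants for the whole series. A secondary, harmless point is passing from the power-series asymptotics in $x=|z|^2$ back to $1-|z|^2$, which is immediate since $1-x=1-|z|^2$.
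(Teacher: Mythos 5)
The paper does not prove this lemma at all: it is quoted verbatim as a well-known estimate, with a citation to Lemma 3.10 of Zhu's \emph{Operator Theory in Function Spaces}, and your argument --- expanding $(1-\bar z w)^{-(2+t+c)/2}$ and its conjugate by the binomial series, killing the off-diagonal terms by the angular integral, evaluating the radial integrals as Beta functions, and then using Stirling to see the coefficients are comparable to $(n+1)^{c-1}$ so that the three cases follow from the standard asymptotics of $\sum_{n\ge 0}(n+1)^{c-1}x^n$ as $x\to 1^-$ --- is exactly the standard proof found in that reference, and it is correct. The only caveat is the degenerate range $2+t+c\le 0$ (which forces $c<0$), where $\Gamma\bigl((2+t+c)/2\bigr)$ can hit a pole and your coefficient formula breaks down; but there the kernel $|1-\bar z w|^{-(2+t+c)}=|1-\bar z w|^{|2+t+c|}$ is bounded above on $\D$ and bounded below on $\{|w|<1/2\}$, so the claimed two-sided boundedness is immediate without any expansion.
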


Now we give our result in this section.
\begin{thm}\label{2main}
Suppose $0<s<1$, $\max\{s, 1-s\}<p\leq 1$ and $\{z_n\}_{n=1}^{\infty}$  is a Blaschke sequence in $\D$. Let $B$ be the Blaschke product  whose zero sequence is  $\{z_n\}_{n=1}^{\infty}$.    Then the following conditions are equivalent:
\begin{enumerate}
\item [(a)]  $\sum_{n=1}^{\infty} (1-|z_n|^2)^s\delta_{z_n}$   is an $s$-Carleson measure;
\item [(b)] $$
  \sup_{\phi\in \text{Aut}(\D) } \int_\D \left|\frac{B'(\phi(z))}{B(\phi(z))}\right|^p (1-|\phi(z)|^2)^{p} (1-|z|^2)^{s-2} dA(z)<\infty.
  $$
\end{enumerate}
\end{thm}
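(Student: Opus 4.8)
The plan is to recast both conditions as statements about $s$-Carleson measures and to exploit the explicit logarithmic derivative of $B$. Writing $b_k(z)=(z_k-z)/(1-\overline{z_k}z)$, a direct computation gives
$$
\frac{B'(z)}{B(z)}=-\sum_{k=1}^\infty\frac{1-|z_k|^2}{(z_k-z)(1-\overline{z_k}z)}.
$$
For $\phi=e^{i\theta}\sigma_a$ I would change variables $w=\phi(z)$ in the integral of (b); using $1-|\sigma_a(z)|^2=(1-|a|^2)(1-|z|^2)/|1-\overline{a}z|^2$ together with the involution property, the rotation $e^{i\theta}$ is absorbed into $a$ and the integral collapses to
$$
I(a):=\int_\D\left|\frac{B'(w)}{B(w)}\right|^p(1-|w|^2)^{p+s-2}\frac{(1-|a|^2)^s}{|1-\overline{a}w|^{2s}}\,dA(w),
$$
so that (b) is equivalent to $\sup_{a\in\D}I(a)<\infty$. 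By Lemma \ref{S-CM} (with $t=s$) this says precisely that $d\nu:=|B'/B|^p(1-|\cdot|^2)^{p+s-2}\,dA$ is an $s$-Carleson measure. On the other hand, $d\mu:=|B'|^p(1-|\cdot|^2)^{p+s-2}\,dA$ being an $s$-Carleson measure is the very definition of $B\in F(p,p-2,s)$, which by Theorem \ref{F-Inner} is equivalent to (a).

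The implication (b)$\Rightarrow$(a) is then immediate. Since $|B(w)|\le1$ on $\D$, we have $|B'(w)|\le|B'(w)/B(w)|$, hence $\mu\le\nu$ as measures and $\mu(S(I))\le\nu(S(I))$ for every arc $I$. Thus if $\nu$ is $s$-Carleson so is $\mu$, i.e.\ $B\in F(p,p-2,s)$, and Theorem \ref{F-Inner} yields (a).

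For (a)$\Rightarrow$(b), the substantive direction, I would use that $0<p\le1$ to apply the elementary subadditivity $\big|\sum_k c_k\big|^p\le\sum_k|c_k|^p$ to the series for $B'/B$, obtaining
$$
\left|\frac{B'(w)}{B(w)}\right|^p\le\sum_{k=1}^\infty\frac{(1-|z_k|^2)^p}{|z_k-w|^p\,|1-\overline{z_k}w|^p}.
$$
Substituting into $I(a)$ and interchanging summation and integration gives $I(a)\le\sum_k(1-|z_k|^2)^p(1-|a|^2)^s\,J_k(a)$, where
$$
J_k(a)=\int_\D\frac{(1-|w|^2)^{p+s-2}}{|z_k-w|^p\,|1-\overline{z_k}w|^p\,|1-\overline{a}w|^{2s}}\,dA(w).
$$
I would evaluate $J_k(a)$ by the M\"obius substitution $w=\sigma_{z_k}(u)$: each of $|z_k-w|$, $|1-\overline{z_k}w|$, $1-|w|^2$ and the Jacobian $|\sigma_{z_k}'(u)|^2$ contributes a power of $|1-\overline{z_k}u|$, and the crux of the argument is that these powers cancel \emph{exactly}, leaving
$$
J_k(a)=\frac{(1-|z_k|^2)^{s-p}}{|1-\overline{a}z_k|^{2s}}\int_\D\frac{(1-|u|^2)^{p+s-2}}{|u|^p\,|1-\gamma u|^{2s}}\,dA(u),
$$
with $\gamma=\gamma(a,z_k)$ satisfying $|\gamma|=\rho(a,z_k)<1$.

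It then remains to bound the residual integral uniformly in $\gamma$. Splitting $\D$ into $\{|u|<1/2\}$ and $\{|u|\ge1/2\}$, on the first region the integrand is controlled by $|u|^{-p}$, which is integrable since $p<2$, while on the second region $|u|^{-p}$ is bounded and Lemma \ref{usefulestimates} applies with $t=p+s-2>-1$ (using $p+s>1$) and with excess exponent $c=s-p<0$ (using $p>s$); as $c<0$ the integral stays bounded as $|\gamma|\to1^-$. This is exactly where the hypothesis $\max\{s,1-s\}<p$ is used in full. Consequently $J_k(a)\lesssim(1-|z_k|^2)^{s-p}|1-\overline{a}z_k|^{-2s}$, and therefore
$$
I(a)\lesssim\sum_{k=1}^\infty\frac{(1-|a|^2)^s(1-|z_k|^2)^s}{|1-\overline{a}z_k|^{2s}}.
$$
By Lemma \ref{S-CM} applied to the measure $\sum_k(1-|z_k|^2)^s\delta_{z_k}$, hypothesis (a) makes the right-hand side bounded uniformly in $a$, which gives $\sup_a I(a)<\infty$ and hence (b). The main obstacle is precisely this direction: verifying the exact cancellation of the $|1-\overline{z_k}u|$ powers in $J_k(a)$ and checking that the single remaining integral is uniformly bounded, a step that genuinely requires both $p>s$ and $p+s>1$ and so explains the failure outside the range $\max\{s,1-s\}<p\le1$.
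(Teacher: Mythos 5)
Your proposal is correct and takes essentially the same route as the paper's proof: the same change of variables reducing (b) to the M\"obius-invariant integral, the same use of $|B|\le 1$ together with Theorem \ref{F-Inner} for (b)$\Rightarrow$(a), and for (a)$\Rightarrow$(b) the same $p\le 1$ subadditivity bound, M\"obius substitution with exact cancellation of the $|1-\overline{z_k}u|$ powers, and conclusion via Lemma \ref{S-CM}. The only cosmetic difference is that you control the residual factor $|u|^{-p}$ by splitting the disk at $|u|=1/2$ before applying Lemma \ref{usefulestimates}, whereas the paper discards that factor by citing \cite[Lemma 4.26]{Zhu}.
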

\begin{proof} By a change of variables, (b) is equivalent to
\begin{equation}\label{change vari}
\sup_{a\in \D} \int_\D \left|\frac{B'(z)}{B(z)}\right|^p (1-|z|^2)^{p-2}(1-|\sigma_a(z)|^2)^s dA(z)<\infty.
\end{equation}
For the sake of comparison with Theorem \ref{Nolder}, we use (b) in the expressions  of this theorem.

First, let (b) hold. Then (\ref{change vari}) is true. Combining this with the fact that $|B(z)|\leq 1$ for all $z\in \D$, we see that
$$
\sup_{a\in \D} \int_\D |B'(z)|^p (1-|z|^2)^{p-2}(1-|\sigma_a(z)|^2)^s dA(z)<\infty,
$$
which yields $B\in F(p, p-2, s)$. Note that  $0<s<1$ and $\max\{s, 1-s\}<p\leq 1$. By Theorem \ref{F-Inner}, $\sum_{n=1}^{\infty} (1-|z_n|^2)^s\delta_{z_n}$   is an $s$-Carleson measure.

On the other hand, let $\sum_{n=1}^{\infty} (1-|z_n|^2)^s\delta_{z_n}$ be an $s$-Carleson measure. By Lemma \ref{S-CM}, we get
\begin{equation}\label{scm inter}
\sup_{a\in \D} \sum_{n=1}^{\infty} \frac{(1-|a|^2)^s  (1-|z_n|^2)^s}{|1-\overline{a}z_n|^{2s}}<\infty.
\end{equation}
Note that
$$
\frac{B'(z)}{B(z)}=\sum_{n=1}^{\infty}\frac{|z_n|^2-1}{(z_n-z)(1-\overline{z_n}z)}
$$
for $z\in \D\setminus \{z_n\}_{n=1}^{\infty}$.
Since $0<\max\{s, 1-s\}<p\leq 1$, we have
$$
\left|\frac{B'(z)}{B(z)}\right|^p \leq \sum_{n=1}^{\infty}\frac{(1-|z_n|^2)^p}{|z_n-z|^p |1-\overline{z_n}z|^p}.
$$
 Hence, for each  $a\in \D$, we deduce that
\begin{eqnarray}
&~& \int_\D \left|\frac{B'(z)}{B(z)}\right|^p (1-|z|^2)^{p-2}(1-|\sigma_a(z)|^2)^s dA(z) \nonumber \\
&\leq& \sum_{n=1}^{\infty}  (1-|z_n|^2)^p \int_\D  \frac{(1-|z|^2)^{p-2}(1-|\sigma_a(z)|^2)^s}{|z_n-z|^p |1-\overline{z_n}z|^p} dA(z) \nonumber \\
&=& (1-|a|^2)^s  \sum_{n=1}^{\infty}    \int_\D \frac{(1-|z_n|^2)^p(1-|z|^2)^{p-2+s}}{|1-\overline{a}z|^{2s}|1-\overline{z_n}z|^{2p}|\sigma_{z_n}(z)|^p} dA(z). \label{mid 1}
\end{eqnarray}
Making a change of variables $w=\sigma_{z_n}(z)$ in the integral in (\ref{mid 1}), we get
\begin{eqnarray}
&~&\int_\D \frac{(1-|z|^2)^{p-2+s}}{|1-\overline{a}z|^{2s}|1-\overline{z_n}z|^{2p}|\sigma_{z_n}(z)|^p} dA(z) \nonumber \\
&=&  \int_\D  \frac{(1-|\sigma_{z_n}(w)|^2)^{p+s}}{|w|^p (1-|w|^2)^2 |1-\overline{a}\sigma_{z_n}(w)|^{2s}|1-\overline{z_n}\sigma_{z_n}(w)|^{2p}} dA(w) \nonumber \\
&=&   (1-|z_n|^2)^{s-p} \int_\D \frac{(1-|w|^2)^{p+s-2}}{|w|^p |1-\overline{z_n} w|^{2s}|1-\overline{a}\sigma_{z_n}(w)|^{2s}}   dA(w). \label{mid 2}
\end{eqnarray}
For $0<p\leq 1$, $dA(w)/|w|^p=r^{1-p}drd\theta$, where $w=re^{i\theta}$. Thus the integral  in (\ref{mid 2}) is convergent. In fact, from \cite[Lemma 4.26]{Zhu}, we know
\begin{eqnarray}
&~&\int_\D \frac{(1-|w|^2)^{p+s-2}}{|w|^p |1-\overline{z_n} w|^{2s}|1-\overline{a}\sigma_{z_n}(w)|^{2s}}   dA(w) \nonumber \\
&\lesssim&  \int_\D \frac{(1-|w|^2)^{p+s-2}}{ |1-\overline{z_n} w|^{2s}|1-\overline{a}\sigma_{z_n}(w)|^{2s}}   dA(w) \label{mid 3}
\end{eqnarray}
for all $z_n$ and $a\in \D$.  We compute that
\begin{eqnarray*}
 1-\overline{a}\sigma_{z_n}(w)&=& \frac{1-\overline{a}z_n-w(\overline{z_n}-\overline{a})}{1-\overline{z_n}w}\\
 &=&(1-\overline{a}z_n)\frac{1-w \overline{\sigma_{z_n}(a)}}{1-\overline{z_n}w}.
\end{eqnarray*}
Combining this  with  Lemma \ref{usefulestimates}, we obtain
\begin{eqnarray}
&~&\int_\D \frac{(1-|w|^2)^{p+s-2}}{ |1-\overline{z_n} w|^{2s}|1-\overline{a}\sigma_{z_n}(w)|^{2s}}   dA(w) \nonumber \\
&=& \frac{1}{|1-\overline{a}z_n|^{2s}} \int_\D \frac{(1-|w|^2)^{p+s-2}}{|1-w \overline{\sigma_{z_n}(a)}|^{2s}} dA(w) \nonumber \\
&\thickapprox& \frac{1}{|1-\overline{a}z_n|^{2s}},  \label{mid 4}
\end{eqnarray}
where we use   $0<\max\{s, 1-s\}<p$.

Consequently, formulas (\ref{scm inter}),  (\ref{mid 1}), (\ref{mid 2}),  (\ref{mid 3}) and (\ref{mid 4}) yield
\begin{eqnarray*}
&~& \sup_{a\in \D}\int_\D \left|\frac{B'(z)}{B(z)}\right|^p (1-|z|^2)^{p-2}(1-|\sigma_a(z)|^2)^s dA(z)  \\
 &\lesssim& \sup_{a\in \D}  \sum_{n=1}^{\infty} \frac{(1-|a|^2)^s  (1-|z_n|^2)^s}{|1-\overline{a}z_n|^{2s}}<\infty.
\end{eqnarray*}
The proof is complete.
\end{proof}

\section{An application of $s$-Carleson measures $\sum_{n=1}^{\infty} (1-|z_n|^2)^s\delta_{z_n}$ to $C_\B(F(p, p-2, s))$}

In 1974,  J. Anderson, J.  Clunie and Ch.  Pommerenke \cite{ACP} posed the following  problem:  what is the closure  in the Bloch norm of $H^\infty$? This problem remains open.
For a space $X$ of functions analytic in $\D$, let $C_\B(X)$ be the closure of $X$ in the Bloch norm.  An important result in this field is P. Jones' characterization of $C_B(BMOA)$ (see \cite{GZ}).
Later, R. Zhao \cite{Zhao1} described $C_\B(F(p, p-2, s))$ when $0<s\leq 1$ and $1\leq p<\infty$. R. Zhao's characterization of $C_\B(F(p, p-2, s))$ is independent of the parameter $p$. In this section, we consider the proper inclusion relation related to $C_\B(F(p, p-2, s))$ via interpolating Blaschke products associated with $s$-Carleson measures $\sum_{n=1}^{\infty} (1-|z_n|^2)^s\delta_{z_n}$.

For $f\in H(\D)$ and $\epsilon>0$, let $\Omega_\epsilon(f)=\{z\in \D:  |f'(z)|(1-|z|^2)\geq \epsilon\}$. We recall R. Zhao's characterization of $C_\B(F(p, p-2, s))$ as follows.
\begin{otherth} \label{zhao}
Let $0<s\leq 1$,  $1\leq p <\infty$ and $0\leq t<\infty$. Suppose  $f\in \B$. Then $f\in C_\B (F(p, p-2, s))$ if and only if for every $\epsilon>0$,
$$
\sup_{a\in \D} \int_{\Omega_\epsilon(f)} |f'(z)|^t (1-|z|^2)^{t-2} (1-|\sigma_a(z)|^2)^s dA(z)<\infty.
$$
\end{otherth}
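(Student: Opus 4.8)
The first step is a pointwise comparison that removes the parameter $t$ altogether. On $\Omega_\epsilon(f)$ one has $\epsilon\le(1-|z|^2)|f'(z)|\le\|f\|_\B$, so for any $t_1,t_2\ge 0$ the ratio of $|f'(z)|^{t_1}(1-|z|^2)^{t_1-2}$ and $|f'(z)|^{t_2}(1-|z|^2)^{t_2-2}$ equals $[(1-|z|^2)|f'(z)|]^{t_1-t_2}$, which on $\Omega_\epsilon(f)$ is bounded above and below by positive constants depending only on $\epsilon$, $\|f\|_\B$ and $t_1-t_2$. Hence
$$|f'(z)|^{t_1}(1-|z|^2)^{t_1-2}\thickapprox|f'(z)|^{t_2}(1-|z|^2)^{t_2-2},\qquad z\in\Omega_\epsilon(f).$$
I would record this as a lemma (valid for all $t_1,t_2\ge0$, in particular covering $t=0$). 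It shows at once that the condition in the theorem does not depend on the choice of $t$, and, taking $t_2=p$, that it does not depend on $p$ either, which is precisely the $p$-independence observed by Zhao. I therefore prove the equivalence only for $t=p$, writing (C) for the resulting statement that $|f'(z)|^p(1-|z|^2)^{p-2}\chi_{\Omega_\epsilon(f)}\,dA$ is, for every $\epsilon>0$, an $s$-Carleson measure in the sense of the defining integral of $F(p,p-2,s)$.

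\textbf{Necessity.} Assume $f\in C_\B(F(p,p-2,s))$ and fix $\epsilon>0$. Choose $g\in F(p,p-2,s)$ with Bloch seminorm $\|f-g\|_\B<\epsilon/2$; note $g\in\B$ since $F(p,p-2,s)\subseteq\B$ for these parameters. On $\Omega_\epsilon(f)$ we get $(1-|z|^2)|g'(z)|\ge\epsilon-\epsilon/2=\epsilon/2$, so $\Omega_\epsilon(f)\subseteq\Omega_{\epsilon/2}(g)$, and $(1-|z|^2)|f'-g'|<\epsilon/2\le(1-|z|^2)|g'|$ there, whence $|f'|\le 2|g'|$ on $\Omega_\epsilon(f)$. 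Combining the swap lemma (applied to $f$) with this pointwise bound,
\begin{align*}
\int_{\Omega_\epsilon(f)}|f'|^t(1-|z|^2)^{t-2}(1-|\sigma_a(z)|^2)^s\,dA
&\thickapprox\int_{\Omega_\epsilon(f)}|f'|^p(1-|z|^2)^{p-2}(1-|\sigma_a(z)|^2)^s\,dA\\
&\lesssim\int_\D|g'|^p(1-|z|^2)^{p-2}(1-|\sigma_a(z)|^2)^s\,dA\le\|g\|_{F(p,p-2,s)}^p,
\end{align*}
uniformly in $a\in\D$. This is the asserted bound for every $\epsilon>0$.

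\textbf{Sufficiency, and the main obstacle.} Conversely, assume (C). Fixing a small $\epsilon>0$, I aim to produce $g\in F(p,p-2,s)$ with $\|f-g\|_\B\lesssim\epsilon$; letting $\epsilon\to0$ then gives $\text{dist}_\B(f,F(p,p-2,s))=0$, i.e. $f\in C_\B(F(p,p-2,s))$. The guiding idea is to retain $f'$ on $\Omega_\epsilon(f)$, where (C) controls the $F$-norm, and to delete it off $\Omega_\epsilon(f)$, where $(1-|z|^2)|f'|<\epsilon$ is Bloch-small. Since $\chi_{\Omega_\epsilon(f)}f'$ is not analytic, I would instead take a smooth cutoff $\psi$ with $\psi\equiv1$ on $\Omega_\epsilon(f)$, $\text{supp}\,\psi$ contained in a fixed hyperbolic neighbourhood of $\Omega_\epsilon(f)$, and $|\overline\partial\psi|\lesssim(1-|z|^2)^{-1}$; solve $\overline\partial u=(\overline\partial\psi)f'$; and set $g'=\psi f'-u$ (an analytic function) and $g(z)=\int_0^z g'$. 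Then $f'-g'=(1-\psi)f'+u$, whose first summand is supported off $\Omega_\epsilon(f)$ and hence Bloch-small, while $g'=\psi f'-u$ is supported essentially on $\Omega_\epsilon(f)$, so its $F$-norm is governed by (C) once $u$ is controlled; the comparison of the thin transition layer $\text{supp}\,\psi\setminus\Omega_\epsilon(f)$ with $\Omega_\epsilon(f)$ itself is a routine covering argument.

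The entire difficulty concentrates in the $\overline\partial$-correction $u$: one must show simultaneously that $(1-|z|^2)|u(z)|$ is uniformly $O(\epsilon)$ (to keep $\|f-g\|_\B\lesssim\epsilon$) and that $u$ adds only a finite $s$-Carleson $F(p,p-2,s)$-norm (to keep $g$ in the space). The datum $(\overline\partial\psi)f'$ is supported in the transition layer, where $(1-|z|^2)|f'|\thickapprox\epsilon$; feeding it into a suitably weighted Cauchy-type solution operator and estimating the output by $s$-Carleson-measure bounds, in the spirit of the kernel estimates of Lemma \ref{usefulestimates} and the change-of-variable arguments used in Theorem \ref{2main}, is the technical crux. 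I expect this combined sup-norm-and-Carleson $\overline\partial$ estimate, rather than the soft reductions above, to be the real work of the proof.
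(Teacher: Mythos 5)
First, a point of context: the paper itself gives no proof of this statement. Theorem \ref{zhao} is quoted as background (it is R.~Zhao's theorem, cited from \cite{Zhao1}), so the only meaningful comparison is with the published proof there and with the closely related arguments in \cite{GZ} and \cite{GMP}. Within your proposal, the exponent-swap lemma (independence of $t$, hence of $p$, of the integral condition on $\Omega_\epsilon(f)$) and the necessity half are correct and are essentially the standard arguments: approximate $f$ in the Bloch seminorm by $g\in F(p,p-2,s)$ within $\epsilon/2$, note $\Omega_\epsilon(f)\subseteq\Omega_{\epsilon/2}(g)$ and $|f'|\le 2|g'|$ there, and dominate the integral by $2^p\|g\|_{F(p,p-2,s)}^p$.

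The genuine gap is the sufficiency direction: what you give is a plan, not a proof. The entire weight of that implication rests on the $\bar\partial$-correction $u$, for which one must prove simultaneously that $|u|$ (or at least $(1-|z|^2)|u(z)|$) is uniformly $O(\epsilon)$ and that $u$ contributes a finite $F(p,p-2,s)$-type integral uniformly in $a$ --- and you explicitly defer exactly this estimate as ``the real work of the proof.'' Deferring the only hard step leaves the theorem unproved; to complete your route you would need Jones's $L^\infty$ solution of $\bar\partial$ with Carleson-measure data, together with a verification that the datum $(\bar\partial\psi)f'$ (of size $\lesssim \epsilon(1-|z|^2)^{-2}$ on a transition layer contained in $\Omega_{\epsilon/2}(f)$) generates a Carleson measure; the $t=0$ case of the hypothesis does give that $\chi_{\Omega_{\epsilon/2}(f)}(1-|z|^2)^{s-2}dA$ is an $s$-Carleson measure, hence $\chi_{\Omega_{\epsilon/2}(f)}(1-|z|^2)^{-1}dA$ is a Carleson measure since $s\le 1$, but none of this is carried out in your write-up. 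It is also worth knowing that the actual proofs in \cite{Zhao1} (and in \cite{GZ}, \cite{GMP} for the $BMOA$ and Hardy-space cases) avoid $\bar\partial$ machinery entirely: one splits $f$ \emph{analytically} via the Bergman reproducing formula, setting $f_1'(z)=c_b\int_{\Omega_\epsilon(f)}(1-|w|^2)^b f'(w)(1-\overline{w}z)^{-2-b}dA(w)$ and letting $f_2'$ be the same integral over $\D\setminus\Omega_\epsilon(f)$. Then $f_2$ is Bloch-small because $|f'(w)|(1-|w|^2)<\epsilon$ on its domain of integration and the Forelli--Rudin estimates (as in Lemma \ref{usefulestimates}) apply, while $f_1\in F(p,p-2,s)$ follows from the hypothesis by Schur-type kernel estimates. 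That decomposition delivers directly, by elementary integral estimates, exactly what your cutoff-plus-correction scheme leaves open.
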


Based on some arguments of $s$-Carleson measures $\sum_{n=1}^{\infty} (1-|z_n|^2)^s\delta_{z_n}$,  we obtain the following proper inclusion relation, which is the main result in this section.

\begin{thm} \label{3main}
Let $0<s\leq 1$ and $1\leq p <\infty$. Then
$$
C_\B(F(p, p-2, s)) \subsetneqq \bigcap_{s<t<\infty} C_\B(F(p, p-2, t)).
$$
\end{thm}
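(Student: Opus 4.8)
The plan is to prove the inclusion and its strictness separately: the first is soft and follows from monotonicity of the spaces in the parameter, while the second requires an explicit interpolating Blaschke product that sits in the right-hand intersection but escapes $C_\B(F(p,p-2,s))$. For the inclusion I would first record that $F(p,p-2,s)\subseteq F(p,p-2,t)$ for $s<t$: if $|f'(z)|^p(1-|z|^2)^{p-2+s}dA$ is an $s$-Carleson measure then, since $1-|z|^2\lesssim|I|$ on $S(I)$, the measure $\nu_t=|f'(z)|^p(1-|z|^2)^{p-2+t}dA$ obeys $\nu_t(S(I))\lesssim|I|^{t-s}\nu_s(S(I))\lesssim|I|^t$, hence is $t$-Carleson — this is exactly the estimate used in the step (c)$\Rightarrow$(b) of Theorem \ref{mainresult}. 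Bloch closure preserves inclusions, so $C_\B(F(p,p-2,s))\subseteq C_\B(F(p,p-2,t))$ for every $t>s$, and intersecting over $t>s$ yields the asserted containment.

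For strictness I treat $0<s<1$ and build the witness as an interpolating Blaschke product. I would distribute zeros by dyadic generations: in generation $j$ place $N_j=\lfloor 2^{js}\rfloor$ points, equally spaced in argument, on the circle $|z|=1-2^{-j}$, staggering the angular positions across generations. A generation-by-generation geometric-series estimate then shows, for every $t>s$, that $\sum_n(1-|z_n|^2)^t\delta_{z_n}$ is a $t$-Carleson measure (the exponent $s$ is precisely the convergence threshold in $\sum_j N_j 2^{-jt}$), while at $t=s$ the total mass $\sum_j N_j 2^{-js}\approx\sum_j 1$ already diverges, so $\sum_n(1-|z_n|^2)^s\delta_{z_n}$ is \emph{not} an $s$-Carleson measure. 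The same estimate at $t=1$, together with the angular spacing $2^{-js}\gg 2^{-j}$ forcing separation, shows $\{z_n\}$ is separated and $\sum_n(1-|z_n|^2)\delta_{z_n}$ is a Carleson measure; by the criterion recalled in the Introduction, $\{z_n\}$ is an $H^\infty$-interpolating sequence, so its Blaschke product $B$ is an interpolating Blaschke product.

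It remains to place $B$ correctly. On one side, for each $t\in(s,1)$ Theorem \ref{F-Inner} applies (as $\max\{t,1-t\}<1\le p$) and gives $B\in F(p,p-2,t)\subseteq C_\B(F(p,p-2,t))$; since every $t>s$ dominates some such $t'$ and the spaces increase in the parameter, $B\in\bigcap_{s<t<\infty}C_\B(F(p,p-2,t))$. On the other side $|B'(z)|(1-|z|^2)\le 1$ gives $B\in\B$, so Theorem \ref{zhao} applies with space-parameter $s$ and any fixed exponent $\tau>0$ in place of its ``$t$''. Here I would invoke the standard fact that for an interpolating Blaschke product there exist $\epsilon,r>0$ such that the pseudohyperbolic disks $\{w:\rho(w,z_n)<r\}$ are pairwise disjoint and contained in $\Omega_\epsilon(B)$. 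On each such disk $1-|z|^2\approx 1-|z_n|^2$, $1-|\sigma_a(z)|^2\approx 1-|\sigma_a(z_n)|^2$, the area is $\approx(1-|z_n|^2)^2$, and $|B'(z)|\ge\epsilon(1-|z|^2)^{-1}$, so
$$\int_{\Omega_\epsilon(B)}|B'(z)|^\tau(1-|z|^2)^{\tau-2}(1-|\sigma_a(z)|^2)^s\,dA(z)\gtrsim\epsilon^\tau\sum_{n=1}^{\infty}(1-|\sigma_a(z_n)|^2)^s.$$
By Lemma \ref{S-CM} the supremum over $a\in\D$ of the right-hand side is infinite precisely because $\sum_n(1-|z_n|^2)^s\delta_{z_n}$ fails to be $s$-Carleson; thus Zhao's criterion fails and $B\notin C_\B(F(p,p-2,s))$, establishing strictness.

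The main obstacle is the properness half: verifying the threshold Carleson estimate uniformly over all arcs and scales, so that it holds for every $t>s$ yet fails at $t=s$, and establishing the lower bound $\{w:\rho(w,z_n)<r\}\subseteq\Omega_\epsilon(B)$ together with disjointness for the interpolating Blaschke product (this rests on $|B'(z_n)|(1-|z_n|^2)=\prod_{k\neq n}\rho(z_n,z_k)\ge\delta$ and a Schwarz--Pick comparison near each zero). Finally, the endpoint $s=1$ is not reachable by this construction, since a separated, Carleson zero sequence automatically forces $B\in F(p,p-2,1)$; there one instead notes that $\bigcap_{1<t<\infty}C_\B(F(p,p-2,t))=\B$ while $C_\B(F(p,p-2,1))=C_\B(BMOA)$ is a proper Bloch-closed subspace of $\B$, and exhibits a lacunary witness.
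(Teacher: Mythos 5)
Your proposal is correct, but it reaches the strict inclusion by a genuinely different route than the paper. The soft inclusion via monotonicity and the endpoint $s=1$ (where $C_\B(F(p,p-2,1))=C_\B(BMOA)\subsetneqq \B=\bigcap_{1<t<\infty}C_\B(F(p,p-2,t))$ by \cite{GZ}) are the same in both arguments. For $0<s<1$ the paper does not construct the witness by hand: it first proves Lemma \ref{IB}, the equivalence that an interpolating Blaschke product lies in $C_\B(F(p,p-2,s))$ if and only if $\sum_n(1-|z_n|^2)^s\delta_{z_n}$ is $s$-Carleson, and then quotes Lemma 2.1 of \cite{BWY} for a Blaschke sequence failing the $s$-Carleson condition while satisfying a logarithmically damped uniform bound; that single bound yields at once the Carleson (hence interpolating) property and the $t$-Carleson property for every $t\in(s,1)$, after which Lemma \ref{IB} is applied in both directions. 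You instead build the sequence explicitly (generations of $\approx 2^{js}$ equally spaced points at height $2^{-j}$), and your threshold analysis is sound: the box estimate $\mu_t(S(I))\lesssim |I|^{1+t-s}+|I|^t\lesssim |I|^t$ (where $s\le 1$ is exactly what saves the exponent) gives $t$-Carleson for all $t>s$, while the divergent total mass kills $s$-Carleson; separation within and across generations also checks out. Your non-membership argument --- Zhao's criterion (Theorem \ref{zhao}) plus disjoint pseudohyperbolic disks inside $\Omega_\epsilon(B)$ via the estimate of \cite{GPV} --- is precisely the necessity half of the paper's Lemma \ref{IB}, reproved inline. Where you genuinely diverge is the positive half: instead of the sufficiency direction of Lemma \ref{IB} (which in the paper rests on the Ortega--F\'abrega inequality, Lemma \ref{inequ}), you invoke Theorem \ref{F-Inner} with parameter $t\in(s,1)$ (legitimate, since $p\ge 1>\max\{t,1-t\}$), which places $B$ in $F(p,p-2,t)$ itself rather than merely in its Bloch closure. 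What each approach buys: yours is self-contained (no appeal to \cite{BWY}) and produces the stronger witness $B\in\bigcap_{s<t<\infty}F(p,p-2,t)\setminus C_\B(F(p,p-2,s))$; the paper's is shorter given the cited lemma and isolates the full equivalence of Lemma \ref{IB}, which is of independent interest. The details you defer are routine and do work out.
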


Before the proof of Theorem \ref{3main}, we need some preliminary results. The following useful   inequality is well known (cf.  \cite[Lemma 2.5]{O}).

\begin{otherl} \label{inequ}
Suppose   $s>-1$, $r>0$, $t>0$, and $t<s+2<r$. Then there exists a positive constant $C$ such that
$$
\int_\D\frac{(1-|w|^2)^s}{|1-\overline{w}z|^r |1-\overline{w}\zeta|^t}dA(w)\leq C \frac{(1-|z|^2)^{2+s-r}}{|1-\overline{\zeta}z|^t}
$$
for all $z$, $\zeta\in \D$.
\end{otherl}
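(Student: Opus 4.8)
The plan is to fix $z,\zeta\in\D$, write $\rho=|1-\overline{\zeta}z|$, and split the integral over $\D$ according to whether $w$ is ``far from'' or ``close to'' $\zeta$ relative to $\rho$. Concretely I would set
$$
E_1=\Bigl\{w\in\D:\ |1-\overline{w}\zeta|\geq \tfrac{\rho}{2}\Bigr\},\qquad E_2=\Bigl\{w\in\D:\ |1-\overline{w}\zeta|<\tfrac{\rho}{2}\Bigr\},
$$
and estimate the two pieces separately. The geometric input I would prepare first is the elementary bound $|w-\zeta|\leq|1-\overline{w}\zeta|$, which follows from $|1-\overline{w}\zeta|^2-|w-\zeta|^2=(1-|w|^2)(1-|\zeta|^2)\geq0$. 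Writing $1-\overline{w}z=(1-\overline{\zeta}z)+z(\overline{\zeta}-\overline{w})$ then yields the two-sided pseudo-triangle inequality
$$
\rho-|1-\overline{w}\zeta|\ \leq\ |1-\overline{w}z|\ \leq\ \rho+|1-\overline{w}\zeta|.
$$

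On $E_1$ I would pull the factor $|1-\overline{w}\zeta|^{-t}\leq(2/\rho)^{t}$ out of the integral, reducing to a single-denominator integral:
$$
\int_{E_1}\frac{(1-|w|^2)^s}{|1-\overline{w}z|^r|1-\overline{w}\zeta|^t}dA(w)\ \lesssim\ \frac{1}{\rho^{t}}\int_{\D}\frac{(1-|w|^2)^s}{|1-\overline{w}z|^r}dA(w).
$$
Here Lemma \ref{usefulestimates}, applied with the parameter $c=r-s-2>0$ coming from the hypothesis $s+2<r$, gives $\int_{\D}(1-|w|^2)^s|1-\overline{w}z|^{-r}dA(w)\thickapprox(1-|z|^2)^{2+s-r}$, so the $E_1$-piece is $\lesssim(1-|z|^2)^{2+s-r}\rho^{-t}$, which is exactly the claimed bound. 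This step uses $s>-1$ (for the validity of Lemma \ref{usefulestimates}) and $s+2<r$ (to land in its $c>0$ regime).

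On $E_2$ the pseudo-triangle inequality gives $|1-\overline{w}z|\thickapprox\rho$, so I can factor out $|1-\overline{w}z|^{-r}\thickapprox\rho^{-r}$ and am left with the \emph{localized} integral $\int_{E_2}(1-|w|^2)^s|1-\overline{w}\zeta|^{-t}dA(w)$. The key point, and the main obstacle, is that the crude bound $\int_{E_2}\cdots\leq\int_{\D}\cdots\lesssim1$ (valid because $t<s+2$) is too weak: it produces only $\rho^{-r}$, which cannot match the target since $t<s+2$. Instead I would estimate the cap integral with the correct power of $\rho$ by decomposing $E_2$ into the dyadic rings $A_j=\{2^{-j-1}\rho\leq|1-\overline{w}\zeta|<2^{-j}\rho\}$ and invoking the standard area estimate $\int_{\{|1-\overline{w}\zeta|<\delta\}}(1-|w|^2)^s\,dA(w)\thickapprox\delta^{\,s+2}$; on $A_j$ this gives $\lesssim(2^{-j}\rho)^{s+2-t}$, and since $s+2-t>0$ the geometric series over $j\geq0$ is dominated by its top term, so $\int_{E_2}(1-|w|^2)^s|1-\overline{w}\zeta|^{-t}dA(w)\lesssim\rho^{\,s+2-t}$. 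Combining, the $E_2$-piece is $\lesssim\rho^{\,s+2-t-r}$.

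It then remains to check $\rho^{\,s+2-t-r}\lesssim(1-|z|^2)^{2+s-r}\rho^{-t}$, that is $(1-|z|^2)^{r-s-2}\lesssim\rho^{\,r-s-2}$; this is immediate because $\rho=|1-\overline{\zeta}z|\geq1-|z|\thickapprox1-|z|^2$ and the exponent $r-s-2$ is positive. Adding the $E_1$- and $E_2$-contributions gives the desired inequality with a constant $C$ independent of $z,\zeta$. The only point needing care beyond bookkeeping is the area estimate when $-1<s<0$, where $(1-|w|^2)^s$ is unbounded near $\partial\D$; there I would integrate in boundary-box coordinates (height $1-|w|\lesssim\delta$, angular width $\lesssim\delta$) and use $s>-1$ to ensure convergence of the radial integral, which is precisely where that hypothesis enters.
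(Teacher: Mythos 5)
Your argument is correct, and it is worth noting that the paper itself offers no proof of Lemma~\ref{inequ} at all: it is quoted as a known result with a citation to Ortega--F\'abrega \cite[Lemma 2.5]{O}, so your write-up is a genuine, self-contained replacement rather than a variant of an in-paper argument. The decomposition into $E_1=\{|1-\overline{w}\zeta|\geq\rho/2\}$ and $E_2=\{|1-\overline{w}\zeta|<\rho/2\}$ with $\rho=|1-\overline{\zeta}z|$ is sound: the identity $|1-\overline{w}\zeta|^2-|w-\zeta|^2=(1-|w|^2)(1-|\zeta|^2)$ justifies $|w-\zeta|\leq|1-\overline{w}\zeta|$, which in turn gives $|1-\overline{w}z|\thickapprox\rho$ on $E_2$; the $E_1$ piece reduces correctly to Lemma~\ref{usefulestimates} in the $c=r-s-2>0$ regime (and although that lemma is stated asymptotically as $|z|\to1^-$, the uniform bound over all of $\D$ is the standard form of \cite[Lemma 3.10]{Zhu}, so this is fine); and your key observation --- that the crude bound $\int_{\D}(1-|w|^2)^s|1-\overline{w}\zeta|^{-t}\,dA(w)\lesssim1$ on $E_2$ loses the factor $\rho^{\,s+2-t}$ and genuinely fails (take $\zeta=z$, $\rho=1-|z|^2$) --- is exactly right, as is the dyadic-ring fix using the cap estimate and the convergent geometric series with ratio $2^{-(s+2-t)}$. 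Two cosmetic points: the cap estimate should be stated as one-sided, $\int_{\{|1-\overline{w}\zeta|<\delta\}}(1-|w|^2)^s\,dA(w)\lesssim\delta^{\,s+2}$, since the set is empty when $\delta<1-|\zeta|$ and the lower bound can fail (you only use the upper bound, so no harm); and on $E_2$ the rings actually start at $j=1$ rather than $j=0$ (again harmless, since including $j=0$ only enlarges the region and $|1-\overline{w}\zeta|\geq1-|\zeta|>0$ makes the sum finite anyway). The final reduction $\rho^{\,s+2-t-r}\lesssim(1-|z|^2)^{2+s-r}\rho^{-t}$ via $1-|z|^2\leq2\rho$ and $r-s-2>0$ is correct, and every hypothesis ($s>-1$, $t<s+2$, $s+2<r$, $t>0$) is used where you say it is.
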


We also need the following result, which characterizes interpolating Blaschke products in $C_\B(F(p, p-2, s))$.

\begin{lem}\label{IB}
Suppose   $0<s<1$, $1\leq p<\infty$,  and   $I$ is  an interpolating Blaschke product with zero set $\{z_n\}_{n=1}^{\infty}$. Then $I\in C_\B(F(p, p-2, s))$ if and only if
$\sum_{n=1}^{\infty} (1-|z_n|^2)^s\delta_{z_n}$ is an $s$-Carleson measure.
\end{lem}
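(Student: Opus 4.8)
The plan is to prove the two implications separately, combining R. Zhao's characterization (Theorem \ref{zhao}), the description of inner functions lying in $F(p,p-2,s)$ (Theorem \ref{F-Inner}), and the $s$-Carleson test of Lemma \ref{S-CM}. Observe first that since $0<s<1$ and $1\le p$ we have $\max\{s,1-s\}<1\le p$, so Theorem \ref{F-Inner} is available throughout.

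The sufficiency is essentially immediate. If $\sum_{n}(1-|z_n|^2)^s\delta_{z_n}$ is an $s$-Carleson measure, then by Theorem \ref{F-Inner} the inner function $I$ belongs to $F(p,p-2,s)$. Since $F(p,p-2,s)\subseteq\B$ for $0<s\le 1$, we have $F(p,p-2,s)\subseteq C_\B(F(p,p-2,s))$, and hence $I\in C_\B(F(p,p-2,s))$.

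For the necessity, suppose $I\in C_\B(F(p,p-2,s))$ and fix any $t>0$. By Theorem \ref{zhao}, for every $\epsilon>0$ the quantity $\sup_{a\in\D}\int_{\Omega_\epsilon(I)}|I'(z)|^t(1-|z|^2)^{t-2}(1-|\sigma_a(z)|^2)^s\,dA(z)$ is finite. Because $I$ is an interpolating Blaschke product, its zeros are uniformly separated; set $\delta:=\inf_n\prod_{m\ne n}\rho(z_m,z_n)>0$. Writing $I=J_n b_{z_n}$, where $b_{z_n}$ is the Blaschke factor at $z_n$ and $J_n=\prod_{m\ne n}b_{z_m}$, one computes $|I'(z_n)|(1-|z_n|^2)=\prod_{m\ne n}\rho(z_m,z_n)\ge\delta$. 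Using the identity $|b_{z_n}'(z)|(1-|z|^2)=1-\rho(z_n,z)^2$, the Bloch bound $(1-|z|^2)|J_n'(z)|\lesssim 1$, and a Harnack estimate for the zero-free factor $J_n$ on a small pseudohyperbolic disk, one produces a radius $r_0=r_0(\delta)\in(0,1)$ with $|I'(z)|(1-|z|^2)\ge\delta/2$ for all $z$ in the pseudohyperbolic disk $D(z_n,r_0)$ and all $n$. Thus $\bigcup_n D(z_n,r_0)\subseteq\Omega_{\delta/2}(I)$, and for $r_0$ small relative to $\delta$ these disks are pairwise disjoint.

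Choosing $\epsilon=\delta/2$ and restricting the finite integral above to this disjoint union, I bound from below: on each $D(z_n,r_0)$ we have $|I'(z)|^t(1-|z|^2)^{t-2}\ge(\delta/2)^t(1-|z|^2)^{-2}$, while $1-|z|^2\thickapprox 1-|z_n|^2$, $|1-\overline{a}z|\thickapprox|1-\overline{a}z_n|$, and the Euclidean area of $D(z_n,r_0)$ is $\thickapprox(1-|z_n|^2)^2$. Hence each disk integral is $\gtrsim (1-|a|^2)^s(1-|z_n|^2)^s/|1-\overline{a}z_n|^{2s}$, and summing over $n$ gives
\[
\sup_{a\in\D}\sum_{n=1}^{\infty}\frac{(1-|a|^2)^s(1-|z_n|^2)^s}{|1-\overline{a}z_n|^{2s}}<\infty,
\]
which by Lemma \ref{S-CM} (with $t=s$) is precisely the statement that $\sum_n(1-|z_n|^2)^s\delta_{z_n}$ is an $s$-Carleson measure. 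The main obstacle is the lower bound $|I'(z)|(1-|z|^2)\gtrsim\delta$ on the disks $D(z_n,r_0)$, i.e.\ the inclusion $\bigcup_n D(z_n,r_0)\subseteq\Omega_{\delta/2}(I)$: this is where the interpolating (uniform separation) hypothesis genuinely enters, whereas all subsequent size estimates on pseudohyperbolic disks are routine.
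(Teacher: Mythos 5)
Your proof is correct, and one half of it takes a genuinely different route from the paper. For the necessity direction your argument is essentially the paper's: both proofs restrict Zhao's integral criterion (Theorem~\ref{zhao}) to a pairwise disjoint union of pseudohyperbolic disks centered at the zeros, on which $(1-|z|^2)|I'(z)|$ is bounded below, and then use the standard comparisons $1-|z|^2\thickapprox 1-|z_n|^2$, $|1-\overline{a}z|\thickapprox|1-\overline{a}z_n|$ to extract $\sup_a\sum_n(1-|\sigma_a(z_n)|^2)^s<\infty$; the only difference is that the paper cites \cite[p.~681]{GPV} for the lower bound on the disks, while you sketch it via the factorization $I=J_nb_{z_n}$, Schwarz--Pick, and Harnack. (Note your constant $\delta/2$ is optimistic --- the Harnack argument only yields some $c(\delta)>0$, e.g.\ a power of $\delta$ --- but this is immaterial since Theorem~\ref{zhao} applies to every $\epsilon>0$.) For the sufficiency direction, however, you diverge: the paper gives a direct computation, bounding $|I'(z)|\leq\sum_n(1-|z_n|^2)/|1-\overline{z_n}z|^2$, applying the Ortega--F\'abrega estimate (Lemma~\ref{inequ}), and verifying Zhao's criterion for every $\epsilon$; you instead invoke Theorem~\ref{F-Inner} (legitimate here, since $p\geq 1>\max\{s,1-s\}$) to conclude $I\in F(p,p-2,s)$ outright, whence $I\in C_\B(F(p,p-2,s))$ by the trivial inclusion of a space in its Bloch-norm closure. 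Your route is shorter and costs nothing extra, since the paper already relies on Theorem~\ref{F-Inner} elsewhere; the paper's computation, on the other hand, is self-contained modulo Lemma~\ref{inequ} and exhibits explicitly that the Zhao integral condition holds for $I$, which is in the spirit of the arguments of \cite{GMP, BG} it adapts. Neither sufficiency argument uses the interpolating hypothesis, which is needed only for necessity.
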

\begin{proof}
First, let $I\in C_\B(F(p, p-2, s))$.  We adapt an argument from \cite[p. 1220]{GMP} (see also \cite{BG}). Since $I$ is an interpolating Blaschke product, there exists $\gamma>0$ such that
$$
\inf_n(1-|z_n|^2)|I'(z_n)|\geq \gamma.
$$
Take $\epsilon\in (0, \f{\gamma}{4})$. From   \cite[p. 681]{GPV}),  there exists a positive constant $\rho$ such that
$$
(1-|z|^2)|I'(z)|\geq \epsilon
$$
on every  $D_h(z_n, \rho)$, where
$D_h(z_n, \rho)$ is the hyperbolic disk with center $z_n$ and radius $\rho$. Namely,
$$
D_h(z_n, \rho)=\{z\in \D: \f{1}{2}\log\f{1+|\sigma_{z_n}(z)|}{1-|\sigma_{z_n}(z)|}<\rho\}.
$$
Then
$$
\bigcup_{n=1}^\infty D_h(z_n, \rho) \subseteq \{z\in \D: (1-|z|^2)|I'(z)|\geq \epsilon\}.
$$
Since $\{a_{n}\}$ is uniformly separated, we can select $\rho$ such that these  hyperbolic disks are pairwise disjoints. Combining these with
Theorem \ref{zhao}, we see  that
\begin{eqnarray*}
\infty&>& \sup_{w\in\D} \int_{\{z\in \D:\  (1-|z|^2)|I'(z)|\geq \epsilon\}} \frac { (1-|\sigma_w(z)|^2)^s}{(1-|z|^2)^2} dA(z)\\
&\geq&  \sup_{w\in\D} \sum_{n=1}^\infty \int_{D_h(z_n, \rho)}\frac { (1-|\sigma_w(z)|^2)^s}{(1-|z|^2)^2} dA(z).
\end{eqnarray*}
By \cite[Proposition 4.5]{Zhu} and \cite[Lemma 4.30]{Zhu}, we know that
$$
1-|z|\thickapprox 1-|z_n|\thickapprox |1-\overline{z_n}z|, \ \ |1-\overline{w}z|\thickapprox |1-\overline{w}z_n|,
$$
for all $z\in D_h(z_n, \rho)$ and all  $w\in \D$.
Therefore,
$$
\sup_{w\in\D}\sum_{n=1}^{\infty}(1-|\sigma_w(z_n)|^2)^s<\infty,
$$
which gives that $\sum_{n=1}^{\infty} (1-|z_n|^2)^s\delta_{z_n}$ is an $s$-Carleson measure.

On the other hand, let $\sum_{n=1}^{\infty} (1-|z_n|^2)^s\delta_{z_n}$ be an $s$-Carleson measure. Bear in mind  that
$$
|I'(z)|\leqslant \sum_{n=1}^{\infty}\frac{1-|z_{n}|^{2}}{|1-\overline{z_{n}}z|^{2}}, \ \ z\in \D.
$$
Then for every  $\epsilon>0$, one gets that
\begin{eqnarray*}
&~& \sup_{a\in \D} \int_{\Omega_\epsilon(I)} \frac {(1-|\sigma_a(z)|^2)^s }{(1-|z|^2)^2}dA(z)\\
&\leq& \frac{1}{\epsilon}\sup_{a\in \D} \int_{\Omega_\epsilon(I)} (1-|z|^2)|I'(z)| \frac {(1-|\sigma_a(z)|^2)^s }{(1-|z|^2)^2} dA(z)\\
&\leq&  \frac{1}{\epsilon}\sup_{a\in \D} \sum_{n=1}^{\infty} \int_{\Omega_\epsilon(I)} \frac{1-|z_{n}|^{2}}{|1-\overline{z_{n}}z|^{2}}
\frac {(1-|\sigma_a(z)|^2)^s }{1-|z|^2} dA(z)\\
&\leq & \sup_{a\in \D} \sum_{n=1}^{\infty} \frac {(1-|a|^2)^s (1-|z_n|^2)}{\epsilon}  \int_\D
\frac{(1-|z|^2)^{s-1}}{|1-\overline{z_n}z|^2|1-\overline{a}z|^{2s}} dA(z).
\end{eqnarray*}
Note that $0<s<1$. By Lemma \ref{inequ}, we see that
$$
\int_\D
\frac{(1-|z|^2)^{s-1}}{|1-\overline{z_n}z|^2|1-\overline{a}z|^{2s}} dA(z)\lesssim \frac{(1-|z_n|^2)^{s-1}}{|1-\overline{z_{n}}a|^{2s}}.
$$
Thus,
$$
\sup_{a\in \D} \int_{\Omega_\epsilon(I)} \frac {(1-|\sigma_a(z)|^2)^s }{(1-|z|^2)^2}dA(z)\lesssim
\sup_{a\in\D}\sum_{n=1}^{\infty}(1-|\sigma_a(z_n)|^2)^s<\infty.
$$
By Theorem \ref{zhao}, one gets $I\in C_\B(F(p, p-2, s))$.
The proof is complete.
\end{proof}

\vspace{0.1truecm}
\noindent{\bf Proof of Theorem \ref{3main}.}
Note that for $1\leq p<\infty$ and $0<s_1\leq s_2<\infty$, $F(p, p-2, s_1)\subseteq F(p, p-2, s_2)$. Hence $C_\B(F(p, p-2, s_1))\subseteq C_\B(F(p, p-2, s_2))$. Thus for  $0<s\leq 1$ and $1\leq p <\infty$, we have
$$
C_\B(F(p, p-2, s)) \subseteq  \bigcap_{s<t<\infty} C_\B(F(p, p-2, t)).
$$

If $s=1$, then  $C_\B(F(p, p-2, s))=C_\B(BMOA)$ and
$$
\bigcap_{s<t<\infty} C_\B(F(p, p-2, t))=\B.
$$
It is known from \cite{GZ} that $C_\B(BMOA)\subsetneqq \B$. Thus
$$
C_\B(F(p, p-2, 1)) \subsetneqq \bigcap_{1<t<\infty} C_\B(F(p, p-2, t)).
$$

Let $0<s<1$. From Lemma 2.1 in \cite{BWY}, there exists a Blaschke sequence $\{z_n\}_{n=1}^{\infty}$ such that $\sum_{n=1}^{\infty} (1-|z_n|^2)^s\delta_{z_n}$ is not an $s$-Carleson measure and
\begin{equation}\label{3-mid1}
\sup_{w\in \D} \sum_{n=1}^{\infty} \left(1-|\sigma_w(z_n)|^2\right)^{s}\left(\f{1}{\ln \f{2}{1-|\sigma_w(z_n)|^2}}\right)^2<\infty.
\end{equation}
Denote by $I$ the Blaschke product whose zero sequence is   $\{z_n\}_{n=1}^{\infty}$. Formula (\ref{3-mid1}) implies that
$$
\sup_{w\in \D} \sum_{n=1}^{\infty} \left(1-|\sigma_w(z_n)|^2\right)<\infty;
$$
that is,  $\sum_{n=1}^{\infty} (1-|z_n|^2)\delta_{z_n}$ is a Carleson measure. Hence $I$ is an interpolating Blaschke product. By Lemma \ref{IB},
\begin{equation}\label{3-mid2}
I\not\in C_\B(F(p, p-2, s)).
\end{equation}
If $0<s<t<1$, it is clear that
\begin{eqnarray*}
&~&\sup_{w\in \D} \sum_{n=1}^{\infty} \left(1-|\sigma_w(z_n)|^2\right)^t\\
&\lesssim&  \sup_{w\in \D} \sum_{n=1}^{\infty} \left(1-|\sigma_w(z_n)|^2\right)^{s}\left(\f{1}{\ln \f{2}{1-|\sigma_w(z_n)|^2}}\right)^2<\infty,
\end{eqnarray*}
which means that $\sum_{n=1}^{\infty} (1-|z_n|^2)^t \delta_{z_n}$ is  a $t$-Carleson measure. From Lemma \ref{IB} again, $I\in C_\B(F(p, p-2, t))$ for any $t\in (s, 1)$. Hence
\begin{equation}\label{3-mid3}
I\in \bigcap_{s<t<1} C_\B(F(p, p-2, t)).
\end{equation}
Clearly,
\begin{equation}\label{3-mid4}
\bigcap_{s<t<1} C_\B(F(p, p-2, t))=\bigcap_{s<t<\infty} C_\B(F(p, p-2, t)).
\end{equation}
From (\ref{3-mid2}), (\ref{3-mid3}) and (\ref{3-mid4}),  we get
$$
C_\B(F(p, p-2, s)) \subsetneqq \bigcap_{s<t<\infty} C_\B(F(p, p-2, t)).
$$
The proof of Theorem \ref{3main} is finished.
\hfill{$\square$}

\end{document}